\pdfoutput=1
 \documentclass[11 pt, oneside, reqno]{amsart}
\usepackage[top=1.5in,bottom=1in,left=1in,right=1in]{geometry}
\usepackage{amsmath,amssymb,amsthm,amsopn,extarrows,accents,faktor,enumitem,stmaryrd,mathtools}  
\usepackage[linktocpage=true]{hyperref}
\hypersetup{
    colorlinks = true,
    allcolors = {blue}}
\usepackage[dvipsnames]{xcolor}

\usepackage{setspace}
\setstretch{1.2}

\usepackage{tikz-cd,tikz}
\usetikzlibrary{matrix,arrows,decorations.pathmorphing}

\newtheorem{lemma}[equation]{Lemma}
\newtheorem{theorem}[equation]{Theorem}
\newtheorem{proposition}[equation]{Proposition}
\newtheorem{corollary}[equation]{Corollary}
\newtheorem*{theorem*}{Theorem}
\theoremstyle{definition}
\newtheorem{definition}[equation]{Definition}
\newtheorem*{remark*}{Remark}
\newtheorem{remark}[equation]{Remark}
\newtheorem*{example}{Example}

\numberwithin{equation}{section}

\DeclareMathOperator{\Hess}{Hess}
\DeclareMathOperator{\codim}{codim}
\DeclareMathOperator{\gr}{\mathbf{Gr}_{\check{G}}}
\DeclareMathOperator{\grt}{\mathbf{Gr}_{\check{T}}}
\DeclareMathOperator{\spec}{Spec}
\DeclareMathOperator{\proj}{Proj}
\DeclareMathOperator{\rees}{Rees}
\DeclareMathOperator{\pt}{pt}
\DeclareMathOperator{\Ad}{Ad}

\DeclareMathOperator{\stdcirc}{\emph{H}^{\,\circ}}

\DeclareMathOperator{\Hstdcirc}{\mathcal{H}^\circ}
\DeclareMathOperator{\Hstdss}{\mathcal{H}^{\text{s}}}

\renewcommand{\sslash}{\mathbin{/\mkern-5mu/}}
\newcommand{\Tlog}{T^*_{D}\overline{G}}
\newcommand{\Zbar}{\overline{\mathcal{Z}}}
\newcommand{\Gbar}{\overline{G}}
\newcommand{\Tbar}{\overline{T}}
\newcommand{\mubar}{\overline{\mu}}
\newcommand{\mux}{\mu_{\mathcal{X}}}
\newcommand{\muh}{\mu_{H}}
\newcommand{\mubarr}{\overline{\mu}_R}
\newcommand{\nubar}{\overline{\nu}}
\newcommand{\nubarr}{\overline{\nu}_R}
\newcommand{\ombar}{\overline{\omega}}
\newcommand{\pibar}{\overline{\pi}}
\newcommand{\Sl}{\mathcal{S}}
\newcommand{\Z}{\mathcal{Z}}
\newcommand{\He}{\mathcal{H}}
\newcommand{\X}{\mathcal{X}}

\newcommand{\hooklongrightarrow}{\lhook\joinrel\longrightarrow}

\title{The partial compactification of the universal centralizer}
\author{Ana B\u{a}libanu}
\address{Department of Mathematics, Harvard University, 1 Oxford St., Cambridge, MA  02138}
\email{ana@math.harvard.edu}
\date{}

\begin{document}
\maketitle

\begin{abstract}
The universal centralizer of a semisimple algebraic group $G$ is the family of centralizers of regular elements, parametrized by their conjugacy classes. When $G$ is of adjoint type, we construct a smooth, log-symplectic fiberwise compactification $\Zbar$ of the universal centralizer $\Z$ by taking the closure of each fiber in the wonderful compactification $\Gbar$. We use the geometry of the wonderful compactification to give an explicit description of the symplectic leaves of $\Zbar$. We also show that its compactified centralizer fibers are isomorphic to certain Hessenberg varieties---we apply this connection to compute the singular cohomology of $\Zbar$, and to study the geometry of the corresponding universal Hessenberg family. 
\end{abstract}

\setcounter{tocdepth}{1}
\tableofcontents

\section*{Introduction}
Let $G$ be a semisimple complex algebraic group of adjoint type. Its Lie algebra $\mathfrak{g}$ contains a transversal slice $\Sl$ for the adjoint action, which was introduced by Kostant \cite{kos:63}. The \emph{universal centralizer} of $G$ is the smooth affine variety
\[\mathcal{Z}\coloneqq \left\{(a,x)\in G\times \Sl\mid a\in G^x\right\}.\]
It can be obtained by Whittaker Hamiltonian reduction from the cotangent bundle of $G$, and this equips it with a natural symplectic structure.

This variety appears as a prominent technical tool in the derived geometric Satake equivalence of Bezrukavnikov and Finkelberg \cite{bez.fin:08}, in the work of Ng\^{o} on the fundamental lemma \cite{ngo:04, ngo:10}, and in the study of Coulomb branches by Braverman, Finkelberg, and Nakajima \cite{bra.fin.nak:18, bra.fin.nak:19}. In particular, Bezrukavnikov, Finkelberg, and Mirkovic \cite{bez.fin.mir:14} have identified $\mathcal{Z}$ with the spectrum of the equivariant homology of the affine Grassmannian of the Langlands dual group $G^\vee$, so $\Z$ is an example of a Coulomb branch as defined by Nakajima \cite{nak:17}.

In this paper we construct a smooth relative compactification of $\mathcal{Z}$. Its fibers are centralizer closures inside the wonderful compactification $\Gbar$, a distinguished equivariant embedding of $G$ first introduced by de Concini and Procesi \cite{dec.pro:83} which encodes the asymptotic behavior of the group ``at infinity.'' We prove the following theorem, as Theorem \ref{bigmain}:

\begin{theorem*}
The relative compactification
\[\Zbar\coloneqq \left\{(a,x)\in\Gbar\times\Sl\mid a\in\overline{G^x}\right\}\]
is a smooth algebraic variety with a natural log-symplectic Poisson structure whose open dense symplectic leaf is $\Z$.
\end{theorem*}

The log-symplectic Poisson structure on $\Zbar$ is inherited from the log-cotangent bundle of $\Gbar$ by Whittaker reduction. Using the geometry of $\Gbar$, we give an explicit algebraic characterization of its symplectic leaves. Moreover, we explain how to obtain $\Zbar$ directly from the Coulomb branch given by \cite{bez.fin.mir:14}. This approach, which involves a Rees construction, uses the realization of $\Gbar$ via the Vinberg monoid and is similar to another example of a compactified Coulomb branch which appears in \cite{bra.fin.nak:19}.

The compactified centralizer fibers of $\Zbar$ have been studied in \cite{bal:16}, where they were identified with closures of generic centralizer orbits on the flag variety $\mathcal{B}$. We refine these results by studying them as a family. In Theorem \ref{isom} we show that they are in fact isomorphic to certain subvarieties of $\mathcal{B}$ known as Hessenberg varieties, which were first introduced by De Mari, Procesi, and Shayman \cite{dem.pro.sha:92}.

\begin{theorem*}
There is an isomorphism of Poisson varieties between the relative compactification $\Zbar$ and the Hessenberg family
\[\He\coloneqq \left\{(\mathfrak{b},x)\in\mathcal{B}\times\Sl\mid \mathfrak{b}\in\Hess(x)\right\}.\]
For any regular element $x\in\mathfrak{g}$, this gives an isomorphism between the compactified centralizer $\overline{G^x}$ and the standard Hessenberg variety $\Hess(x)$. 
\end{theorem*}

Hessenberg varieties have many applications to combinatorics and representation theory, notably in the recent proof of the Shareshian--Wachs conjecture \cite{sha.wac:16} by Brosnan and Chow \cite{bro.cho:18}, in the description of the quantum cohomology of flag varieties in the work of Kostant and of Rietsch \cite{kos:96, rie:01, rie:03}, and in the study of affine Springer fibers by Goresky, Kottwitz, and MacPherson \cite{gor.kot.mac:06}. We apply the theorem above and the geometry of nilpotent Hessenberg varieties to compute the singular cohomology of $\Zbar$, by constructing an explicit affine paving derived from Schubert cells. 

In the other direction, we use the connection between $\Zbar$ and the Hessenberg family $\He$ to show that the natural Poisson structure on the universal family of standard Hessenberg varieties is log-symplectic. We also identify more general, non-regular Hessenberg varieties with moment map fibers of the log-cotangent bundle of $\Gbar$, proving a conjecture of Crooks and R\"oser \cite{cro.ros:20} which was formulated based on an earlier version of the present paper. This gives a natural way to embed any standard Hessenberg variety into the wonderful compactification.

%
%
%
%
%
%
%
\subsection*{Outline}
In Section \ref{symplectic} we review the construction of the universal centralizer $\mathcal{Z}$ as a Whittaker reduction of the cotangent bundle of $G$. In Section \ref{wonderfuls} we recall some facts about the remarkable geometry of $\Gbar$. In Section \ref{third} we obtain $\Zbar$ as a Whittaker reduction of the log-cotangent bundle of $\Gbar$, show that its induced Poisson structure is log-symplectic, and describe its symplectic leaves. 

In Section \ref{hessenberg} we place the relative compactification $\Zbar$ into the setting of Hessenberg varieties by identifying it with the restriction of the standard universal Hessenberg family to the Kostant slice, and use this to compute its singular cohomology. In Section \ref{fifth} we apply this connection to show that the Poisson structure on the standard universal Hessenberg family is log-symplectic. Lastly, in Section \ref{coulomb} we show how the relative compactification $\Zbar$ is related to the realization of the universal centralizer as a Coulomb branch.

%
%
%
%
%
%
%
\subsection*{Acknowledgements}
The author would like to thank Victor Ginzburg, Sam Evens, Ioan M\u{a}rcu\c{t}, Sergei Sagatov, and Travis Schedler for many interesting discussions. Part of this work was completed while the author was supported by a National Science Foundation MSPRF under award DMS--1902921.\\

%
%
%
%
%
%
%
\section{The universal centralizer}
\label{symplectic}
Let $G$ be a connected complex semisimple algebraic group of adjoint type, and let $\mathfrak{g}$ be its Lie algebra. Given an element $x\in\mathfrak{g}$, we write
\[G^x\coloneqq\{g\in G\mid \Ad_gx=x\} \quad\text{and}\quad\mathfrak{g}^x\coloneqq\{y\in \mathfrak{g}\mid \text{ad}_yx=0\}.\]
Fix a principal $\mathfrak{sl}_2$-triple $\{e,h,f\}$---a triple of regular elements in $\mathfrak{g}$ that satisfy the $\mathfrak{sl}_2$ commutation relations. The corresponding \emph{Kostant slice} 
\[\Sl\coloneqq f+\mathfrak{g}^e\] 
consists entirely of regular elements and intersects each regular adjoint orbit in $\mathfrak{g}$ exactly once and transversally. The restriction of the adjoint quotient map
\begin{equation}
\label{adjquot}
\mathfrak{g}\longrightarrow\mathfrak{g}\sslash G
\end{equation}
to the subvariety $\Sl$ is an isomorphism, and therefore $\Sl$ provides a regular section for the adjoint quotient \cite{kos:63}.

\begin{definition}
The \emph{universal centralizer} of $G$ is the affine variety
\[\mathcal{Z}\coloneqq \left\{(a,x)\in G\times\Sl\mid a\in G^x\right\}.\]
\end{definition}

The variety $\Z$ is the family of centralizers of regular elements of $\mathfrak{g}$, parametrized by representatives of their conjugacy classes. Because $G$ is of adjoint type, the fibers of $\mathcal{Z}$ over points in $\Sl$ are connected. Moreover, $\Z$ has a natural symplectic structure, which is inherited from the cotangent bundle of $G$. We recall how to construct this structure using Whittaker reduction, following Kostant \cite{kos:79}. 

Trivializing the cotangent bundle of $G$ with respect to the left $G$-action, and using the Killing form to identify $\mathfrak{g}$ with its dual, we obtain an identification
\begin{equation}
\label{identification}
T^*G\cong G\times\mathfrak{g}.
\end{equation}
Under this isomorphism, the cotangent lifts of left- and right-multiplication on $G$ correspond to the $G\times G$-action
\[(g,h)\cdot (a,x)=(gah^{-1}, \Ad_hx)\qquad\text{for } g,h\in G,\, (a,x)\in G\times\mathfrak{g}.\]
Under the identification
\begin{align*}
\mathfrak{g}\times\mathfrak{g}&\xlongrightarrow{\sim} \mathfrak{g}^*\times\mathfrak{g}^*\\
			(x,y)&\longmapsto ((x,\cdot),-(y,\cdot))
			\end{align*}
given by the Killing form, the associated moment map is
\begin{align}
\label{eq:first}
\mu:T^*G\cong G\times\mathfrak{g}			&\longrightarrow\mathfrak{g}\times\mathfrak{g}\\
			(a,x)					&\longmapsto(\Ad_ax,x).\nonumber
\end{align}
Note that the image of $\mu$ is the set of conjugate pairs inside $\mathfrak{g}\times\mathfrak{g}$, and that the fiber above the diagonal point $(x, x)$ is
\[\mu^{-1}(x, x)\cong G^ x.\]

Let $B$ be the unique Borel subgroup of $G$ whose Lie algebra contains the regular nilpotent $e$, let $N$ be its unipotent radical, and write $\mathfrak{b}$ and $\mathfrak{n}$ for the corresponding Lie algebras. Letting the maximal unipotent subgroup $N\times N$ act on $T^*G$, and using the Killing form to identify $\mathfrak{n}^*\cong\mathfrak{g}/\mathfrak{b}$, the moment map
\[\nu:\,T^*G\longrightarrow\mathfrak{n}^*\times\mathfrak{n}^*\]
factors through the moment map for the $G\times G$-action: 
\begin{equation*}
\label{killing}
\begin{tikzcd}[row sep=large, column sep=large]
T^*G\cong G\times\mathfrak{g}		\arrow{r}{\mu}\arrow{rd}[swap]{\nu}		&\mathfrak{g}\times\mathfrak{g}\arrow{d}\\
  														&\mathfrak{g}/\mathfrak{b}\times\mathfrak{g}/\mathfrak{b}.
\end{tikzcd}
\end{equation*}
The coset $(f, f)\in\mathfrak{g}/\mathfrak{b}\times\mathfrak{g}/\mathfrak{b}$, which corresponds to a regular character in $\mathfrak{n}^*\times\mathfrak{n}^*$, is fixed by the action of $N\times N$. The fiber of $\nu$ above this point is
\begin{align*}
\nu^{-1}(f, f)	&=\left\{(a, x)\in G\times\mathfrak{g}\mid  x\in f+\mathfrak{b}, \,\Ad_ax\in f+\mathfrak{b}\right\}.
\end{align*}

We will repeatedly use the essential fact \cite[Theorem 1.2]{kos:78} that the action map induces an isomorphism 
\begin{equation}
\label{freeness}
N\times(f+\mathfrak{g}^e)\longrightarrow f+\mathfrak{b}.
\end{equation}
In view of this, $N\times N$ acts freely on $\nu^{-1}(f, f)$. Since $\nu$ is a moment map, this implies that $(f, f)$ is a regular value of $\nu$ and therefore that the variety $\nu^{-1}(f, f)$ is smooth. Moreover, there is a natural $N\times N$-equivariant isomorphism
\begin{align*}
N\times N\times \mathcal{Z}		&\longrightarrow \nu^{-1}(f, f)\\
		(n_1,n_2,(a, x))		&\longmapsto (n_1a n_2^{-1}, \Ad_{n_2}x).
\end{align*}
This gives an identification
\[\mathcal{Z}\cong\nu^{-1}(f, f)/(N\times N),\]
and the right-hand side, being a Hamiltonian reduction, is equipped with the structure of a symplectic variety.

\begin{remark}
The symplectic structure on the universal centralizer can also be obtained by considering the diagonal $G$-action on the open dense locus
\[G\times\mathfrak{g}^\text{r}\subset T^*G\]
and taking a Hamiltonian reduction at $0$ along the corresponding moment map
\begin{align*}
\Phi:G\times\mathfrak{g}^\text{r}&\longrightarrow \mathfrak{g}\\
			(a,x)&\longmapsto \Ad_ax-x.
			\end{align*}
The fiber of this moment map above $0$ is
\[\Phi^{-1}(0)=\{(a,x)\in G\times\mathfrak{g}^{\text{r}}\mid a\in G^x\},\]
and its quotient by the diagonal action of $G$ is an abelian group scheme over $\mathfrak{g}\sslash G$. The universal centralizer $\Z$ is then the pullback of this quotient along the isomorphism $\Sl\xlongrightarrow{\sim}\mathfrak{g}\sslash G$. 
\end{remark}

%
%
%
%
%
%
%
\section{The wonderful compactification}
\label{wonderfuls}
To develop a compactified version of the construction outlined in the previous section, we will use the \emph{wonderful compactification} $\Gbar$ of the group $G$. This is a canonical, smooth, equivariant projective embedding of $G$ which was introduced in \cite{dec.pro:83}. A detailed survey of its geometry can be found in \cite{eve.jon:08}, and here we collect only the facts that we will need later. 

\subsection{Geometry of the wonderful compactification}
\label{bundle}
Let $l$ be the rank of $G$. The $G\times G$-equivariant compactification $\Gbar$ contains $G$ as an open dense subset, and the complement
\[D\coloneqq \Gbar\backslash G\]
is a divisor with simple normal crossings and irreducible components $D_1,\ldots,D_l$. The group $G\times G$ acts on $D$ with finitely many orbits, which are in bijection with the proper subsets $I\subset\{1,\ldots,l\}$. The closure of the orbit $\mathcal{O}_I$ is
\[\overline{\mathcal{O}_I}=\bigcap_{i\not\in I} D_i\]
---the intersection of the divisor components not indexed by $I$. In particular, the $G\times G$-orbit closures are smooth, and there is a unique closed orbit of minimal dimension obtained by intersecting all the irreducible components of $D$.

Let $T=G^h$ be the centralizer of the regular semisimple element $h$. Then $T$ is a maximal torus contained in the Borel $B$, and we write 
\[\Delta=\{\alpha_1,\ldots,\alpha_l\}\]
for the corresponding set of simple roots. Let $P_I$ be the standard parabolic subgroup generated by $B$ and the negative simple root groups indexed by $\{\alpha_i\mid i\in I\}$, and let $P_I^-$ be the opposite parabolic with respect to $T$. Write $U_I$ and $U_I^-$ for their respective unipotent radicals, $L_I=P_I\cap P_I^-$ for their shared Levi component, and $Z(L_I)$ for its center. We will denote by $\mathfrak{p}_I^\pm, \mathfrak{u}_I^\pm,$ and $\mathfrak{l}_I$ the Lie algebras of these subgroups.

For each index set $I$ there is a distinguished basepoint $z_I\in\mathcal{O}_I$ whose stabilizer is
\[\text{Stab}_{G\times G}(z_I)=\left\{(us,vt)\in P_I\times P_I^-\mid u\in U_I, v\in U_I^-, s,t\in L_I, st^{-1}\in Z(L_I)\right\}.\]
This implies that the orbit $\mathcal{O}_I$ fibers over the product $G/P_I\times G/P_I^-$ with fiber isomorphic to $L_I/Z(L_I)$, which is a semisimple group of adjoint type. Taking the closure, we obtain a smooth fibration
\begin{equation*}
\begin{tikzcd}[row sep=large]
\overline{L_I/Z(L_I)}		\arrow[r, hook]&\overline{\mathcal{O}_I}\arrow[d, twoheadrightarrow]\\
  														&G/P_I\times G/P_I^-,
\end{tikzcd}
\end{equation*}
where the fiber is the wonderful compactification of the adjoint group $L_I/Z(L_I)$. In particular, the unique closed orbit of minimal dimension is isomorphic to the product of full flag varieties
\[G/B\times G/B^-.\]
%

%
%
%
%
%
%
%
\subsection{The logarithmic cotangent bundle}
Because the divisor $D$ has simple normal crossings, the sheaf of vector fields on $\Gbar$ which are tangent to $D$ is locally free. The associated vector bundle $T_D\Gbar$ is called the \emph{logarithmic tangent bundle} of $\Gbar$. Because $D$ is $G\times G$-stable, the group action induces a map of vector bundles
\[\Gbar\times\mathfrak{g}\times\mathfrak{g}\longrightarrow T_D\Gbar,\]
which is surjective by \cite[Example 2.5]{bri:09}. The kernel of this map is Lagrangian, and therefore it is isomorphic to the \emph{logarithmic cotangent bundle} $\Tlog$---the bundle whose sections are differential forms with logarithmic poles along $D$. This gives an equivariant embedding 
\begin{equation}
\label{loghom}
\Tlog\hooklongrightarrow \Gbar\times\mathfrak{g}\times\mathfrak{g},
\end{equation}
of vector bundles over $\Gbar$, where $G\times G$ acts on the right-hand side by
\[(g,h)\cdot (a,y,x)=(gah^{-1}, \Ad_gy, \Ad_hx)\qquad\text{for } g,h\in G,\, (a,y,x)\in G\times\mathfrak{g}\times\mathfrak{g}.\]

\begin{remark} 
Under our earlier identification \eqref{identification}, the map \eqref{loghom} extends the natural embedding
\begin{align}
\label{openemb}
T^*G\cong G\times\mathfrak{g}&\hooklongrightarrow G\times\mathfrak{g}\times\mathfrak{g} \\
				(a,x)&\longmapsto (a, \Ad_ax,x).\nonumber
				\end{align}
The compactification $\Gbar$ can also be realized \cite[Section 3.2]{eve.jon:08} as a subvariety of the Grassmannian 
\[Gr(\dim\mathfrak{g},\mathfrak{g}\times\mathfrak{g}),\]
by taking the closure of the image of the embedding
\begin{align*}
G&\longrightarrow Gr(\dim\mathfrak{g},\mathfrak{g}\times\mathfrak{g})\\
			a&\longmapsto a\cdot\mathfrak{g}_\Delta,
			\end{align*}
where 
\[a\cdot\mathfrak{g}_\Delta=\{(\Ad_ax,x)\mid x\in\mathfrak{g}\}.\]
Using \eqref{openemb}, one sees that the log-cotangent bundle $\Tlog$ is precisely the restriction of the tautological bundle on the Grassmannian to this subvariety. It then follows from \cite[Remark 3.9]{eve.jon:08} that, under the embedding \eqref{loghom}, the fiber of $\Tlog$ above the orbit basepoint $z_I\in\mathcal{O}_I$ is identified with the subalgebra
\begin{equation}
\label{basefiber}
T^*_{D,z_I}\Gbar\cong\mathfrak{p}_I\times_{\mathfrak{l}_I}\mathfrak{p}_I^-.
\end{equation}
\end{remark}

Consider the $G\times G$-equivariant morphism 
\begin{align}
\label{compactified mu}
\overline{\mu}:\,\,&\Tlog\longrightarrow\mathfrak{g}\times\mathfrak{g}\\
				&(a, y, x)\longmapsto( y, x)\nonumber
				\end{align}
given by projection onto the fibers of $\Gbar\times\mathfrak{g}\times\mathfrak{g}$. Because it extends the usual moment map $\mu$ defined in \eqref{eq:first}, the map $\mubar$ is called a \emph{compactified moment map} in \cite{bri:09} and \cite{kno:94}. We conclude this section by listing some properties of this morphism.

\begin{lemma}
\label{image mu}
The image of $\overline{\mu}$ is the variety $\mathfrak{g}\times_{\mathfrak{g}\sslash G}\mathfrak{g}$, which consists of pairs of elements in $\mathfrak{g}\times\mathfrak{g}$ that lie in the closure of the same adjoint orbit.
\end{lemma}
\begin{proof}
Since the fibers of the adjoint quotient are closures of regular adjoint orbits \cite[Theorem 0.7]{kos:63}, the variety $\mathfrak{g}\times_{\mathfrak{g}\sslash G}\mathfrak{g}$ consists of pairs of elements in the same orbit closure. Moreover, since this variety is the preimage of the diagonal under the adjoint quotient map of $\mathfrak{g}\times\mathfrak{g}$, it is closed. 

The map $\overline{\mu}$ is proper because it factors through the projection
\begin{equation*}
\begin{tikzcd}[row sep=large, column sep=large]
\Tlog		\arrow[hookrightarrow]{r}\arrow{rd}[swap]{\overline{\mu}}		&\Gbar\times\mathfrak{g}\times\mathfrak{g}\arrow{d}\\
  														&\mathfrak{g}\times\mathfrak{g}.
\end{tikzcd}
\end{equation*}
Because it is proper, its image is closed, so it is the closure of the image of $\mu$. Since the image of $\mu$ is the set of conjugate pairs in $\mathfrak{g}\times\mathfrak{g}$, and since a function on $\mathfrak{g}\times\mathfrak{g}$ vanishes on the locus of conjugate pairs if and only if it vanishes on $\mathfrak{g}\times_{\mathfrak{g}\sslash G}\mathfrak{g}$, its closure is $\mathfrak{g}\times_{\mathfrak{g}\sslash G}\mathfrak{g}$.
\end{proof}

\begin{lemma}
\label{normal mu}
The variety $\mathfrak{g}\times_{\mathfrak{g}\sslash G}\mathfrak{g}$ is normal.
\end{lemma}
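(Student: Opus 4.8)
The plan is to verify Serre's criterion: since $\mathfrak{g}\times_{\mathfrak{g}//G}\mathfrak{g}$ sits inside the smooth variety $\mathfrak{g}\times\mathfrak{g}$ (and is moreover irreducible, being the image of the irreducible bundle $\Tlog$ under $\overline{\mu'}$ by Proposition \ref{image mu}), it is enough to show that it is a complete intersection, which yields the Cohen--Macaulay property $S_2$, and that its singular locus has codimension at least two, which is the condition $R_1$. By Chevalley's restriction theorem $\mathbb{C}[\mathfrak{g}]^G=\mathbb{C}[f_1,\dots,f_l]$ is a polynomial ring in $l=\mathrm{rk}(\mathfrak{g})$ homogeneous generators, so $\mathfrak{g}//G\cong\mathbb{C}^l$ and
\[\mathfrak{g}\times_{\mathfrak{g}//G}\mathfrak{g}=\{(x,y)\in\mathfrak{g}\times\mathfrak{g}\mid f_i(x)=f_i(y),\ 1\le i\le l\}\]
is cut out in $\mathfrak{g}\times\mathfrak{g}$ by the $l$ equations $f_i(x)-f_i(y)=0$.

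First I would establish that this is a complete intersection. Write $n=\dim\mathfrak{g}$. Every irreducible component has codimension at most $l$ by Krull's height theorem, hence dimension at least $2n-l$. For the reverse bound I project onto the first factor: the fiber over $x$ is $\{x\}\times\pi^{-1}(\pi(x))$, where $\pi\colon\mathfrak{g}\to\mathfrak{g}//G$ is the adjoint quotient, and by Kostant every fiber of $\pi$ is equidimensional of dimension $n-l$. Thus each component has dimension at most $n+(n-l)=2n-l$, so the variety is equidimensional of dimension $2n-l$ and is cut out by exactly $l$ equations in the smooth variety $\mathfrak{g}\times\mathfrak{g}$. It is therefore a complete intersection, hence Cohen--Macaulay, and in particular satisfies $S_2$.

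It remains to check $R_1$. At a point $(x,y)$ the Jacobian of the defining equations is spanned by the covectors $(df_i|_x,-df_i|_y)$ for $1\le i\le l$. By Kostant's theorem the differentials $df_1|_x,\dots,df_l|_x$ are linearly independent precisely when $x$ is regular, so whenever $x$ or $y$ is regular these $l$ covectors are independent and the variety is smooth at $(x,y)$. The singular locus is therefore contained in the set of pairs $(x,y)$ with $f_i(x)=f_i(y)$ for all $i$ in which \emph{both} $x$ and $y$ are irregular. Since the irregular locus $\mathfrak{g}\setminus\mathfrak{g}^{\mathrm{reg}}$ has codimension $3$ in $\mathfrak{g}$, projecting this set onto its first coordinate---with fibers contained in the $(n-l)$-dimensional fibers of $\pi$---bounds its dimension by $(n-3)+(n-l)=2n-l-3$. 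Hence the singular locus has codimension at least three, so $R_1$ holds and Serre's criterion gives normality.

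The main obstacle is the $R_1$ estimate: one must correctly locate the singular locus and control its dimension. The essential inputs are Kostant's structural results on the adjoint quotient---that the differentials of the fundamental invariants are independent exactly on the regular locus, that all fibers of $\pi$ are equidimensional of dimension $n-l$, and that the irregular set has codimension three---all of which are available from \cite{kos:63}.
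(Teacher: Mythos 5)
Your proof is correct and takes essentially the same route as the paper: exhibit $\mathfrak{g}\times_{\mathfrak{g}//G}\mathfrak{g}$ as a complete intersection cut out by the $l$ functions $f_i(x)-f_i(y)$ (giving $S_2$), bound the singular locus using Kostant's result that the $df_i$ are independent exactly at regular elements (giving $R_1$), and conclude by Serre's criterion. You fill in details the paper leaves implicit---the equidimensionality argument justifying the complete-intersection claim and the explicit codimension-$3$ bound via the irregular locus---and your smooth locus (where $x$ \emph{or} $y$ is regular) is slightly larger than the paper's $\mathfrak{g}^{\mathrm{reg}}\times_{\mathfrak{g}//G}\mathfrak{g}^{\mathrm{reg}}$, but these are refinements of the same argument.
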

\begin{proof}
Since $\mathfrak{g}\times_{\mathfrak{g}\sslash G}\mathfrak{g}$ is the image of $\mubar$, it is irreducible, and therefore it has dimension 
\[2\dim G-\dim(\mathfrak{g}\sslash G)=2\dim G-l.\]
Let 
\[f_1,\ldots,f_l\in\mathbb{C}[\mathfrak{g}]^G\]
be a minimal set of generators for the algebra of $G$-invariant polynomials on $\mathfrak{g}$. Then
\[\mathfrak{g}\times_{\mathfrak{g}\sslash G}\mathfrak{g}=\{(x,y)\in\mathfrak{g}\times\mathfrak{g}\mid f_i(x)-f_i(y)=0 \text{ for all }i=1,\ldots,l\}\]
is the vanishing locus of $l$ algebraically independent polynomials, so it is a complete intersection.

Because the differentials $df_1,\ldots df_l$ are linearly independent at every point of the regular locus $\mathfrak{g}^{\text{r}}$ \cite[Claim 6.7.10]{chr.gin:97}, the subset
\[\mathfrak{g}^{\text{r}}\times_{\mathfrak{g}\sslash G}\mathfrak{g}^{\text{r}}=\{(x,y)\in\mathfrak{g}^{\text{r}}\times\mathfrak{g}^{\text{r}}\mid x\in G\cdot y\}\subset\mathfrak{g}^{\text{r}}\times\mathfrak{g}^{\text{r}}\]
is a smooth open subvariety of $\mathfrak{g}\times_{\mathfrak{g}\sslash G}\mathfrak{g}$. Moreover, its complement has codimension at least $2$ because adjoint orbits are even-dimensional \cite[Claim 6.7.10]{chr.gin:97}. It follows that $\mathfrak{g}\times_{\mathfrak{g}\sslash G}\mathfrak{g}$ is a complete intersection with no codimension-one singularities, so it is normal. \qedhere
\end{proof}

\begin{lemma} 
\label{connected}
The fibers of $\overline{\mu}$ are connected.
\end{lemma}
\begin{proof} 
The fiber of $\overline{\mu}$ above a regular semisimple pair is a connected toric variety \cite[Example 2.5]{bri:09}. Moreover, by Lemmas \ref{image mu} and \ref{normal mu}, the image of $\overline{\mu}$ is normal. Since $\overline{\mu}$ is proper with normal image and the fiber above a generic point is connected, it follows by Stein factorization that every fiber of $\overline{\mu}$ is connected.\qedhere\\
\end{proof}
%

%
%
%
%
%
%
%
\section{The relative compactification of $\Z$}
\label{third}
In this section we compactify the fibers of the universal centralizer $\Z$ by taking their closures inside the wonderful compactification $\Gbar$, and we describe how the symplectic structure on $\Z$ extends to the boundary of this new variety. We begin by recalling some standard facts about log-symplectic manifolds, and we refer to \cite[Section 5]{pym:18} for a more general introduction. 

\subsection{Log-symplectic structures}
Suppose that $X$ is a complex manifold containing a divisor 
\[Z=\cup Z_i\]
with simple normal crossings. A \emph{log-symplectic} structure on $(X,Z)$ is a closed, nondegenerate $2$-form with logarithmic poles along $Z$. Such a form restricts to a symplectic form on $X\backslash Z$, and induces an isomorphism
\[T_{Z}X\xlongrightarrow{\sim}T^*_{Z}X\]
between the log-tangent bundle and the log-cotangent bundle of $X$. 

There is a corresponding isomorphism of associated sheaves
\[\mathcal{T}_{X,Z}\xlongrightarrow{\sim}\mathcal{T}^*_{X,Z}\]
between the log-tangent sheaf, whose sections are vector fields parallel to $Z$, and the log-cotangent sheaf, whose sections are differential forms with logarithmic poles along $Z$. Reversing this isomorphism and restricting to the subsheaf of differential forms without poles, we get a morphism
\[T^*X\longrightarrow TX.\]
This bundle map corresponds to a generically non-degenerate Poisson bivector. In other words, a log-symplectic structure is a type of Poisson structure.

\begin{example}
\label{logcot}
Suppose that $M$ is a complex manifold with a simple normal crossing divisor $D$. Let $X\coloneqq T^*_DM$, let
\[\varphi:X=T^*_DM\longrightarrow M\]
be the bundle map, and let $Z\coloneqq \varphi^{-1}(D)$ be the simple normal crossing divisor in $T^*_DM$ obtained by taking the preimage of $D$. 

The dual of the differential of $\varphi$, which we write as
\[\overline{\lambda}:\,T^*_DM\longrightarrow T^*_ZX,\]
is a logarithmic $1$-form on $X$ analogous to the usual Liouville $1$-form. Its differential 
\[\ombar\coloneqq d\overline{\lambda}\]
is a closed, nondegenerate logarithmic $2$-form. In this way, the log-cotangent bundle of any complex manifold with a simple normal crossing divisor has a canonical log-symplectic structure.

Explicitly, if $a_1,\ldots,a_n$ are local coordinates on $M$ such that the divisor $D$ is cut out by the vanishing of the product $a_1\cdot\ldots\cdot a_k$, and if $ x_1,\ldots, x_n$ are induced coordinates on the fiber, the log-symplectic form is given locally by
\begin{equation}
\label{incoords}
\ombar=\sum_{i=1}^k\frac{da_i}{a_i}\wedge d x_i+\sum_{i=k+1}^nda_i\wedge d x_i.
\end{equation}
The associated Poisson bivector is
\begin{equation}
\label{incoordspi}
\pibar=\sum_{i=1}^k{a_i}\partial_{a_i}\wedge \partial_{x_i}+\sum_{i=k+1}^n\partial_{a_i}\wedge \partial_{x_i}.
\end{equation}
In particular, the restriction of $\ombar$ to the cotangent bundle $T^*(X\backslash Z)\subset T^*_ZX$ is the canonical symplectic form.
\end{example}

For completeness, we include the following log-symplectic analogue of the Marsden-Weinstein-Meyer Theorem. Although it appears to be known to experts, we were unable to find a reference in the existing literature. A similar statement is discussed in \cite[Section 7.7]{gua.li.pel.rat:17}.

\begin{proposition}
\label{log-reduction}
Let $(X,Z)$ be a log-symplectic Poisson manifold with log-symplectic form $\ombar$. Suppose that $H$ is a connected Lie group with a Hamiltonian action on $X$, let 
\[\rho:X\longrightarrow\mathfrak{h}^*,\]
be the moment map, and let $ x\in\mathfrak{h}^*$ be a point fixed by the coadjoint $H$-action. Suppose that $H$ acts on the fiber $\widetilde{X}\coloneqq \rho^{-1}( x)$ freely and that the intersection $\widetilde{X}\cap Z$ is a divisor in $\widetilde{X}$. Then
\begin{enumerate}
\item $X_0\coloneqq \widetilde{X}/H$ is smooth and $Z_0\coloneqq (\widetilde{X}\cap Z)/H$ is a simple normal crossing divisor;
\item there is a log-symplectic structure $\ombar_0$ on $(X_0,Z_0)$ such that 
\begin{equation}
\label{pullbacks}
\iota^*\ombar=q^*\ombar_0,
\end{equation}
where $\iota:\widetilde{X}\hooklongrightarrow X$ is the inclusion and $q:\widetilde{X}\longrightarrow X_0$ is the quotient map.
\end{enumerate}
\end{proposition}

\begin{proof}
(1) Let $L\subset X$ be any symplectic leaf that intersects the fiber $\widetilde{X}$---since the action of $H$ is Hamiltonian and $H$ is connected, $L$ is $H$-stable, and we can consider the restricted moment map
\[\rho_{\vert L}:L\longrightarrow \mathfrak{h}^*.\]
The group $H$ acts freely on $\rho_{\vert L}^{-1}( x)$ and $L$ is symplectic, so $ x$ is a regular value of $\rho_{\vert L}$. Being a regular value of $\rho_{\vert L}$ for every symplectic leaf $L$, $x$ is also a regular value of $\rho$. 

This implies that $\widetilde{X}$, and therefore $X_0$, are both smooth. Moreover, writing 
\[Z=Z_1\cup\ldots\cup Z_l\]
as a union of irreducible components, we obtain a stratification of $Z$ indexed by the subsets $I\subset\{1,\ldots,l\}$ which is given by the smooth submanifolds
\[Z^I=(\cap_{i\in I}Z_i)\backslash (\cup_{i\not\in I}Z_i).\]
Any such stratum $S$ is an $H$-stable union of symplectic leaves, and $x$ is a regular value of $\rho_{\vert S}$, so $\widetilde{X}\cap S$ is smooth. Therefore $\widetilde{Z}=\widetilde{X}\cap Z$ and $Z_0$ are simple normal crossing divisors. 

(2) Since the action of $H$ is Hamiltonian, it preserves the Poisson structure on $X$ and therefore also the degeneracy divisor $Z$. This induces an infinitesimal action map
\begin{align*}
\mathfrak{h}&\longrightarrow \Gamma(T_ZX)\\
x&\longmapsto\vartheta_x
\end{align*}
which assigns to each Lie algebra element a vector field tangent to $Z$.

The restriction of $\ombar$ to $\widetilde{X}$ is a degenerate logarithmic $2$-form whose kernel is generated by these Hamiltonian vector fields. The quotient morphism $q$ induces a surjection of logarithmic tangent bundles
\[q_*:T_{\widetilde{Z}}\widetilde{X}\longrightarrow T_{Z_0}X_0\]
whose kernel is exactly $\langle \vartheta_x\mid x\in\mathfrak{g}\rangle$. Therefore, the $2$-form $\ombar_{\vert\widetilde{X}}$ descends to a non-degenerate logarithmic $2$-form $\ombar_0$ on $(X_0,Z_0)$, which satisfies \eqref{pullbacks} by definition.
\end{proof}

%
%
%
%
%
%
%
\subsection{Whittaker reduction of $\Tlog$}
We now define a fiberwise compactification of the universal centralizer by
\[\Zbar\coloneqq \left\{(a, x)\in\Gbar\times\Sl\mid a\in\overline{G^ x}\right\}.\]
We will use a log-symplectic Hamiltonian reduction of $\Tlog$ to prove the following theorem:

\begin{theorem}
\label{bigmain}
The relative compactification $\Zbar$ is a smooth algebraic variety whose boundary is a divisor with simple normal crossings. The variety $\Zbar$ has a natural log-symplectic structure which extends the symplectic structure on $\Z$.
\end{theorem}
\begin{proof}
The log-cotangent bundle $\Tlog$ is equipped with the canonical log-symplectic form $\ombar$ defined in Example \ref{logcot}, which restricts to the usual symplectic form on $T^*G$. Therefore the action of $G\times G$ on $\Tlog$ is Hamiltonian, and the corresponding moment map is precisely the compactified moment map $\mubar$ defined in \eqref{compactified mu}.

We follow the same reduction procedure as in Section \ref{bundle}. Restricting to the action of the maximal unipotent subgroup $N\times N$, we obtain a moment map
\[\nubar:\,\Tlog\longrightarrow\mathfrak{n}^*\times\mathfrak{n}^*.\]
Making once again the identification $\mathfrak{n}^*\cong\mathfrak{g}/\mathfrak{b}$ gives a commutative diagram
\begin{equation*}
\label{mu factors}
\begin{tikzcd}[row sep=large, column sep=large]
\Tlog		\arrow{r}{\overline{\mu}}\arrow{rd}[swap]{\nubar}		&\mathfrak{g}\times\mathfrak{g}\arrow{d}\\
  													&\mathfrak{g}/\mathfrak{b}\times\mathfrak{g}/\mathfrak{b}.
\end{tikzcd}
\end{equation*}
The points in the fiber of $\nubar$ above $(f,f)$ are of the form 
\[(a, x_1, x_2)\in\Gbar\times\mathfrak{g}\times\mathfrak{g}\]
with $ x_1, x_2\in f+\mathfrak{b}$. By \eqref{freeness}, this implies that $N\times N$ acts freely on $\nubar^{-1}(f,f)$. In particular, $(f,f)$ is a regular value of $\nubar$ and the fiber $\nubar^{-1}(f,f)$ is smooth.

By Lemma \ref{connected}, $\nubar^{-1}(f,f)$ is connected. Being connected and smooth, it is irreducible. It follows that the intersection 
\[\nubar^{-1}(f,f)\cap T^*G=\nu^{-1}(f,f)\]
is dense in $\nubar^{-1}(f,f)$. But this intersection is precisely
\[\nu^{-1}(f,f)=\left\{(g,\,n_1  x,\,n_2 x)\in\,\Tlog\mid n_1,n_2\in N, \, x\in f+\mathfrak{g}^e, \,g\in n_1G^ x n_2^{-1}\right\},\]
and therefore 
\[\nubar^{-1}(f,f)=\left\{(a,\,n_1  x,\,n_2 x)\in\,\Tlog\mid n_1,n_2\in N, \, x\in f+\mathfrak{g}^e, \,a\in n_1\overline{G^ x} n_2^{-1}\right\},\]
This gives an isomorphism
\[\Zbar\cong \nubar^{-1}(f,f)/N\times N.\]

By Proposition \ref{log-reduction}, $\Zbar$ is a smooth variety and its boundary is a divisor with simple normal crossings. Moreover, $\Zbar$ has a natural log-symplectic structure. Its open dense symplectic leaf is the reduction of the open dense leaf $T^*G$ of $\Tlog$---in other words, it is the universal centralizer $\Z$.
\end{proof}

As part of the proof of Theorem \ref{bigmain}, we showed that the fibers of $\mubar$ above regular elements of $\mathfrak{g}\times\mathfrak{g}$ are precisely the closures of the fibers of the usual moment map $\mu$. In particular, we have the following immediate corollary.

\begin{corollary}
Let $ x\in\mathfrak{g}$ be a regular element. There is an isomorphism
\[\mubar^{-1}(x,x)\cong\overline{G^ x},\]
where the right-hand side is the closure of the centralizer of $x$ in the wonderful compactification.
\end{corollary}

\begin{remark}
The compactified centralizer fibers of $\Zbar$ are studied in \cite{bal:16}, where they are identified with the closures of generic centralizer orbits on the flag variety $G/B$. This result will be extended in Section \ref{hessenberg}.

The fiber above a regular semisimple element is the projective toric variety whose fan is the fan of Weyl chambers \cite[Remark 4.5]{eve.jon:08}. The fiber above the principal nilpotent $f$ is the Peterson variety \cite[Theorem 4.1]{bal:16}.  In particular, away from the semisimple locus, the fibers of $\Zbar$ are singular and generally not normal \cite[Theorem 14]{kos:96}. 
\end{remark}

%
%
%
%
%
%
%
\subsection{Symplectic leaves}
The symplectic leaves of $\Zbar$ are Hamiltonian reductions of the symplectic leaves of $\Tlog$. Therefore, in order to describe the former, we first give a general description of the latter. In view of \eqref{incoords}, the restriction of the logarithmic cotangent bundle to each $G\times G$-orbit $\mathcal{O}_I$ is a union of symplectic leaves. We will give an explicit characterization of the leaves in each such restriction.

Fix a subset $I\subseteq\{1,\ldots,l\}$. Let
\[\mathfrak{c}_I:\mathfrak{p}_I\longrightarrow\mathfrak{p}_I/[\mathfrak{p}_I,\mathfrak{p}_I]\]
be the quotient of the parabolic subalgebra $\mathfrak{p}_I$ by its derived subgroup. There is a direct-sum decomposition
\[\mathfrak{p}_I=[\mathfrak{p}_I,\mathfrak{p}_I]\oplus Z(\mathfrak{l}_I),\]
and the fibers of $\mathfrak{c}_I$ are the points in $\mathfrak{p}_I$ with the same component in the center $Z(\mathfrak{l}_I)$ of the Levi. Applying \eqref{basefiber}, we first give a criterion for when two points in the fiber
\[T^*_{D,z_I}\Gbar\cong \mathfrak{p}_I\times_{\mathfrak{l}_I}\mathfrak{p}_I^-\]
are in the same symplectic leaf.

\begin{proposition}
\label{fiberleaf}
Let $(x,x^-),(y,y^-)\in\mathfrak{p}_I\times_{\mathfrak{l}_I}\mathfrak{p}_I^-$. The points $(z_I,x,x^-)$ and $(z_I,y,y^-)$ are in the same symplectic leaf of $\Tlog$ if and only if
\[\mathfrak{c}_I(x)=\mathfrak{c}_I(y).\]
\end{proposition}
\begin{proof}
Let $n=\dim N$. Around the basepoint $z_I\in\mathcal{O}_I$ there are algebraic local coordinates 
\[\{a_1^\pm,\ldots,a_n^\pm,z_1,\ldots,z_l\}\]
such that $D$ is cut out by the vanishing of the product $\Pi_{i\in I}z_i$ and such that the induced coordinates on the fiber
\[\{x_1^\pm,\ldots,x_n^\pm,\zeta_1,\ldots,\zeta_l\}\]
have the property that the directions $\{\zeta_i\mid i\in I\}$ are tangent to the diagonal copy of $Z(l_I)$ \cite[Section 2.2]{eve.jon:08}. In these coodinates, the log-symplectic Poisson bivector defined in \eqref{incoordspi} is 
\[\pibar=\sum_{j}\frac{\partial}{\partial a^\pm_j}\wedge \frac{\partial}{\partial x^\pm_j}+\sum_{i\not\in I}\frac{\partial}{\partial z_i}\wedge \frac{\partial}{\partial \zeta_i}+\sum_{i\in I}z_i\frac{\partial}{\partial z_i}\wedge\frac{\partial}{\partial \zeta_i}.\]
This bivector is tangent to $\{z_i=\zeta_i=0 \mid i\in I\}$ and is nondegenerate along this submanifold. Therefore, two points of 
\[T^*_{D,z_I}\Gbar\cong \mathfrak{p}_I\times_{\mathfrak{l}_I}\mathfrak{p}_I^-\]
are in the same symplectic leaf if and only if they have the same component in the center $Z(\mathfrak{l}_I)$ of the Levi. 
\end{proof}

Recall that the orbit $\mathcal{O}_I$ fibers over the product $G/P_I\times G/P_I^-$ of partial flag varieties. Therefore we can associate to each $a\in\Gbar$ a pair of parabolic subalgebras
\[(\mathfrak{p}_a,\mathfrak{q}_a)\in G/P_I\times G/P_I^-.\]
If $a=gz_Ih^{-1}$, these parabolics satisfy $\mathfrak{p}_a=g\cdot\mathfrak{p}_I$ and $\mathfrak{q}_a=h\cdot\mathfrak{p}_I^-$. There is a canonical identification of algebras
\[\mathfrak{p}_a/[\mathfrak{p}_a,\mathfrak{p}_a]\cong\mathfrak{p}_I/[\mathfrak{p}_I,\mathfrak{p}_I].\]
We define the abelian algebra
\[\mathfrak{a}_I\coloneqq \mathfrak{p}_I/[\mathfrak{p}_I,\mathfrak{p}_I]\]
to be the ``universal'' Cartan algebra associated to the partial flag variety $G/P_I$, and then for each $a\in\mathcal{O}_I$ we have a corresponding quotient map
\[\mathfrak{c}_a:\mathfrak{p}_a\longrightarrow\mathfrak{a}_I.\]

Let $T^*_{D,I}\Gbar$ be the restriction of $\Tlog$ to the orbit $\mathcal{O}_I$. Since $\Tlog$ is an equivariant vector bundle, there is an isomorphism
\[T^*_{D,I}\Gbar\cong (G\times G)\times_{\text{Stab}_{G\times G}(z_I)}(\mathfrak{p}_I\times_{\mathfrak{l}_I}\mathfrak{p}_I^-).\]
This induces a smooth, well-defined morphism
\begin{align*}
T^*_{D,I}\Gbar&\longrightarrow \mathfrak{a}_I \\
	(a,x,x^-)&\longmapsto\mathfrak{c}_a(x).
	\end{align*}
In the next proposition we show that its fibers are precisely the symplectic leaves of $T^*_{D,I}\Gbar.$

\begin{proposition}
Two points $(a,x,x^-)$ and $(b,y,y^-)$ of $T^*_{D,I}\Gbar$ are in the same symplectic leaf of $\Tlog$ if and only if
\[\mathfrak{c}_a(x)=\mathfrak{c}_b(y).\]
\end{proposition}
\begin{proof}
Under the action of $G\times G$, $(a,x,x^-)$ is conjugate to some point $(z_I,x_0,x_0^-)$ in the fiber over $z_I$ and $(b,y,y^-)$ is conjugate to some $(z_I,y_0,y_0^-)$. Since the symplectic leaves of $\Tlog$ are $G\times G$-stable, $(a,x,x^-)$ and $(b,y,y^-)$ are in the same symplectic leaf if and only if the same is true of $(z_I,x_0,x_0^-)$ and $(z_I,y_0,y_0^-)$. By Proposition \ref{fiberleaf}, this happens if and only if $\mathfrak{c}_I(x_0)=\mathfrak{c}_I(y_0).$ Since $\mathfrak{c}_a(x)=\mathfrak{c}_I(x_0)$ and $\mathfrak{c}_b(y)=\mathfrak{c}_I(y_0),$ the theorem follows.
\end{proof}

The symplectic leaves of the Hamiltonian reduction $\Zbar$ are the Hamiltonian reductions of the symplectic leaves of $\Tlog$. We obtain the following immediate corollary.

\begin{corollary}
Two points $(a, x),(b,y)\in\Zbar$ are in the same symplectic leaf if and only if $a$ and $b$ are contained in in the same $G\times G$-orbit and $\mathfrak{c}_a(x)=\mathfrak{c}_b(y).$\\
\end{corollary}

%
%
%
%
%
%
%
\section{The universal family of Hessenberg varieties}
\label{hessenberg}
In this section we will show that each fiber of $\Zbar$ is isomorphic to a certain type of subvariety of $G/B$ known as a Hessenberg variety. This connection makes it possible to compute the singular cohomology of $\Zbar$, and to characterize more general fibers of the moment map $\mubar$. 

\subsection{Hessenberg varieties}
We keep the notation of the previous sections, and we begin with some general background. A \emph{Hessenberg subspace} of $\mathfrak{g}$ is a $B$-submodule of $\mathfrak{g}$ that contains $\mathfrak{b}$. Let $H$ be such a space, and consider the associated vector bundle $G\times_B H.$ It has a canonical Poisson structure, observed also in a special case in \cite{abe.cro:18}, which comes from a Hamiltonian reduction as follows. The right action of $G$ on $T^*G$, given by
\[h\cdot(g,x)=(gh^{-1}, \Ad_hx),\quad\text{for }h\in G,\, (g,x)\in G\times\mathfrak{g},\]
is Hamiltonian with moment map
\begin{align*}
\mu_R:T^*G\cong G\times\mathfrak{g}&\longrightarrow \mathfrak{g}\\
					(g,x)&\longmapsto x
					\end{align*}
Restricting to the action of the Borel $B$ and applying the Killing form to identify $\mathfrak{b}^*\cong\mathfrak{g}/\mathfrak{n}$, we get a moment map
\[T^*G\longrightarrow\mathfrak{g}/\mathfrak{n},\]
and we consider the preimage of the $B$-stable subset $H/\mathfrak{n}\subset\mathfrak{g}/\mathfrak{n}$. Reducing, we obtain the smooth Poisson variety
\[G\times_B H=\mu_{R}^{-1}(H)/B.\]
Its symplectic leaves are in bijective correspondence with the $B$-orbits on the quotient $H/\mathfrak{n}$, and it inherits a Hamiltonian action of $G$ on the left, with moment map 
\begin{align*}
\muh:\, G\times_B H&\longrightarrow\mathfrak{g} \\
		[g:x]&\longmapsto \Ad_gx.
		\end{align*}

The \emph{Hessenberg variety} associated to the point $x\in\mathfrak{g}$ is the moment map fiber 
\[\Hess(x):\,=\muh^{-1}(x)=\left\{gB\in G/B \mid \Ad_{g^{-1}}x\in H\right\}.\]
Because of this, the bundle $G\times_BH$ is called the \emph{universal family of Hessenberg varieties}. We will consider its restriction
\[\mathcal{H}\coloneqq \left\{(gB, x)\in G/B\times\Sl\mid gB\in\Hess(x)\right\}\]
to the Kostant slice $\Sl$. 

\begin{proposition}
The space $\mathcal{H}$ is a smooth Poisson variety and a proper flat family over $\Sl$ of relative dimension $\dim (H/\mathfrak{b})$.
\end{proposition}
\begin{proof}
We will take a Whittaker reduction of the space $G\times_B H$. Restricting to the action of $N$, we get a moment map 
\[\nu_{H}:G\times_B H\longrightarrow \mathfrak{n}^*.\]
Under the usual Killing form identification, it factors as
\begin{equation*}
\begin{tikzcd}[row sep=large, column sep=large]
G\times_B H		\arrow{r}{\muh}\arrow[rd, swap, "\nu_{H}"]	&\mathfrak{g}\arrow{d}\\
  														&\mathfrak{g}/\mathfrak{b}.
\end{tikzcd}
\end{equation*}
As before, we take the fiber above the coset of $f$ in $\mathfrak{g}/\mathfrak{b}$:
\begin{align*}
\nu_{H}^{-1}(f)&=\muh^{-1}(f+\mathfrak{b}) \\
			&=\left\{(gB, x)\in G/B\times\mathfrak{g} \mid x\in(f+\mathfrak{b}), gB\in\Hess(x)\right\}.
						\end{align*}
By \eqref{freeness}, the action of $N$ on this fiber is free, so $\nu^{{-1}}_\He(f)$ is smooth. The action map
\begin{align*}
N\times \mathcal{H}&\longrightarrow\nu^{-1}_H(f) \\
(n, (gB,x))&\longmapsto (ngB, \Ad_nx)
\end{align*}
is an isomorphism, and this realizes $\mathcal{H}$ as the smooth Poisson variety
\[\mathcal{H}\cong\nu^{{-1}}_H(f)/N.\]

Let
\[\pi_\He:\He\longrightarrow\Sl\]
be the structure morphism. Since it factors through the projection $G/B\times\Sl\longrightarrow\Sl$, it is proper. Moreover, the fibers of $\pi_\He$ are regular Hessenberg varieties, which all have dimension $\dim (H/\mathfrak{b})$ \cite[Corollary 2.7]{pre:17}. Since $\mathcal{H}$ and $\Sl$ are both smooth and the fibers of $\pi_\He$ are equidimensional, this morphism is flat. 
\end{proof}

We end this section by showing that the flat family $\He$ has a contracting $\mathbb{C}^*$-actions which allows us to compute its cohomology directly from the cohomology of the fiber above the regular nilpotent point. Recall that $\{e,h,f\}$ is our fixed principal $\mathfrak{sl}_2$-triple, and choose a one-parameter subgroup
\begin{align*}
\gamma:\mathbb{C}^*\longrightarrow G 
		\end{align*}
whose Lie algebra is spanned by the regular semisimple element $h$. The spaces $\Sl$ and $\He$ have natural $\mathbb{C}^*$-actions given by
\begin{align*}
&t\cdot x=t^{2}\Ad_{\gamma(t)}x,\qquad\qquad\qquad\qquad\quad\,\text{for }t\in\mathbb{C}^*, x\in\Sl,\\
&t\cdot (gB,x)=(\gamma(t)gB,\, t^{2}\Ad_{\gamma(t)}x),\quad\quad\,\text{for }t\in\mathbb{C}^*, (gB,x)\in\He,\text{ and}
\end{align*}
The structure morphism $\pi_\He$ is $\mathbb{C}^*$-equivariant with respect to these actions. Moreover, both of these actions are contracting as $t\to 0$---in the limit, the principal slice $\Sl$ is contracted to the principal nilpotent $f\in \Sl$ and the variety $\He$ is contracted to the $\mathbb{C}^*$-fixed points of $\Hess(f)$. 

\begin{proposition}
\label{betti}
There are isomorphisms of singular cohomology rings 
\[H^*(\mathcal{H},\mathbb{C})\cong H^*(\Hess(f),\mathbb{C}).\]
\end{proposition}
\begin{proof}
This follows from a standard fact in the topology of algebraic varieties. Suppose
\[f:X\longrightarrow S\]
is a proper $\mathbb{C}^*$-equivariant morphism between smooth varieties, and suppose that the $\mathbb{C}^*$-action contracts the base $S$ to the point $o\in S$. Then the action gives a deformation retraction of the family $X$ onto a small Euclidean neighborhood $U$ of $f^{-1}(o)$. This implies that 
\[H^*(X,\mathbb{C})\cong H^*(U,\mathbb{C})\cong H^*(f^{-1}(o),\mathbb{C}).\qedhere\]
\end{proof}

%
%
%
%
%
%
%
\subsection{The standard Hessenberg space}
From now on we let $H$ be the \emph{standard Hessenberg space}
\[H\coloneqq \left(\sum_{\alpha\in\Delta}\mathfrak{g}_{-\alpha}\right)\oplus\mathfrak{b},\]
which is the sum of the positive Borel subalgebra and the negative simple root spaces. We write $\He$ for the corresponding universal family of regular Hessenberg varieties.

When $s\in\mathfrak{g}$ is regular and semisimple, the associated Hessenberg variety $\Hess(s)$ is the complete toric variety whose fan is the fan of Weyl chambers \cite[Theorem 11]{dem.pro.sha:92}. When $f\in\mathfrak{g}$ is regular and nilpotent, the regular Hessenberg variety $\Hess(f)$ is precisely the Peterson variety. 

\begin{remark}
When $s\in\mathfrak{g}$ is semisimple, the variety $\Hess(s)$ is connected \cite{pre:15}. Then the moment map
\[\muh:G\times_B H\longrightarrow\mathfrak{g}\]
is a proper map whose target is smooth and whose generic fibers are connected, so by Stein factorization all of its fibers are connected. It follows that the standard Hessenberg variety $\Hess(x)$ is connected for every element $x\in\mathfrak{g}$.
\end{remark}

We now relate the standard family of Hessenberg varieties to the compactified universal centralizer of the previous section. First we will need the following two elementary lemmas.

\begin{lemma}
\label{trivial centralizers}
Let $x\in\Sl$. Then $G^x\cap B=1$.
\end{lemma}
\begin{proof}
Write $x=f+v\in f+\mathfrak{g}^e$ and suppose that $g\in G^x\cap B$. Let $g=tu$ with $t\in T$ and $u\in N$. The Lie algebra $\mathfrak{g}$ is graded by eigenvalues for the adjoint action of the regular semisimple $h$. The regular nilpotent $f$ sits in degree $-2$, and the Borel $\mathfrak{b}$ is the sum of the non-negative eigenspaces. Then
\[f+v=\Ad_g(f+v)=\Ad_{tu}f+\Ad_gv=\Ad_tf+\text{(higher degree terms)}.\]
It follows that $t\in G^f$, and therefore $t=1$ and $g\in N$. But by \eqref{freeness}, $N$ acts freely on $f+\mathfrak{b}$, so $g=1$.
\end{proof}

Write $W$ for the Weyl group associated to the maximal torus $T$. For any element $w\in W$, we abuse notation to denote by $w\in N_G(T)$ an arbitrary choice of preimage in the normalizer of $T$.

\begin{lemma}
\label{bruhats}
Let $x\in\Sl$ and suppose that $g\in G$ is such that $\Ad_gx\in\mathfrak{b}.$ Then $g$ is contained in the maximal Bruhat cell $Bw_0N$.
\end{lemma}
\begin{proof}
Let $x=f+v\in f+\mathfrak{g}^e$ and use the Bruhat decomposition to write $g=bwu$ for some $b\in B$, $u\in N$, and $w\in N_G(T)$. Since 
\[\Ad_b(\Ad_{wu}(f+v))\in\mathfrak{b},\]
it follows that $\Ad_{wu}(f+v)$ is an element of $\mathfrak{b}.$ Writing
\[\Ad_u(f+v)=f+v'\in f+\mathfrak{b},\]
we obtain 
\[\Ad_{w}(f+v')\in\mathfrak{b}.\]
Since, as a sum of root vectors, $f$ has a nonzero component in each negative simple root space \cite[Lemma 3.2.12]{chr.gin:97}, this implies that $w$ flips the sign of every simple root. Therefore $w=w_0$ is the longest element of the Weyl group.
\end{proof}

Because the Kostant slice $\Sl$ is contained in the standard Hessenberg space $H$, we can define the morphism
\begin{align}
\label{morph}
\alpha:\quad\mathcal{Z}\,\,\,&\longrightarrow \He \\
		(g,x)&\longmapsto (gB,x),\nonumber
		\end{align}
which is compatible with the structure maps over the slice $\mathcal{S}$. If we equip $\Zbar$ with the $\mathbb{C}^*$-action
\[t\cdot (a,x)=(\gamma(t)a\gamma(t)^{-1},\, t^{2}\Ad_{\gamma(t)}x),\quad\text{for }t\in\mathbb{C}^*, (a,x)\in\Zbar,\]
then the open dense subset $\mathcal{Z}$ is $\mathbb{C}^*$-stable and the morphism $\alpha$ is $\mathbb{C}^*$-equivariant. Moreover, the action map gives an isomorphism
\[B\times\Sl\longrightarrow H^\circ\coloneqq \left(\sum_{\alpha\in\Delta}\mathfrak{g}_{-\alpha}\backslash\{0\}\right)\oplus\mathfrak{b}.\]
Therefore, the image of $\alpha$ is the open subset of $\He$ given by
\begin{align*}
\He^\circ&\coloneqq \{(gB,x)\in G/B\times\Sl\mid g\in G^x\}\\
			&\,= \{(gB,x)\in G/B\times\Sl\mid \Ad_{g^{-1}}(x)\in H^\circ\}=\He\cap (G\times_BH^\circ) 					.
					\end{align*}

As before, write 
\[\pi_{\Zbar}:\Zbar\longrightarrow \Sl\quad\text{and}\quad \pi_{\He}:\He\longrightarrow\Sl\]
for the structure maps and let $\Sl^{\text{s}}\subset \Sl$ be the semisimple locus of the principal slice. Consider the two open dense subsets
\[\Zbar^{\text{s}}=\pi_{\Zbar}^{-1}(\Sl^{\text{s}})\subset\Zbar\quad\text{and}\quad\Hstdss=\pi_{\He}^{-1}(\Sl^{\text{s}})\subset\He.\]
Above a semisimple element of $\Sl$, the fiber of $\Zbar$ and the fiber of $\He$ are both isomorphic to the toric variety corresponding to the fan of Weyl chambers. Therefore $\alpha$ extends to an isomorphism along every semisimple fiber.

\begin{lemma}
\label{extension}
The restriction of the map $\alpha$ to $\mathcal{S}^\text{s}$ extends to an isomorphism of varieties
\[\Zbar^{\text{s}}\longrightarrow \Hstdss.\]
\end{lemma}
\begin{proof}
The restriction of the adjoint quotient map \eqref{adjquot} to the regular locus $\mathfrak{t}^{\text{r}}$ of the maximal Cartan is the $W$-cover
\[\mathfrak{t}^{\text{r}}\longrightarrow\mathfrak{t}^{\text{r}}/W\cong\mathcal{S}^{\text{s}}.\]
The pullback of $\Z$ to $\mathfrak{t}^{\text{r}}$ along this map is the trivial group scheme $\mathfrak{t}^{\text{r}}\times T,$ and the pullback of $\Zbar$ to $\mathfrak{t}^{\text{r}}$ is the trivial bundle $\mathfrak{t}^{\text{r}}\times \Tbar.$ The pullback of $\He$ to $\mathfrak{t}^{\text{r}}$ is the variety 
\[\He_{\mathfrak{t}^{\text{r}}}\coloneqq\left\{(x,gB)\in \mathfrak{t}^{\text{r}}\times G/B \mid \Ad_{g^{-1}}x\in H\right\}.\]
In particular, the constant section of $\He$ given by the positive Borel $B$ pulls back to a smooth section
\[x\longmapsto (x,g_xB)\]
of the family $\He_{\mathfrak{t}^{\text{r}}}$ and, since each point $(x,g_xB)$ is $G$-conjugate to a point $(s,B)\in\mathcal{H}$, Lemma \ref{bruhats} implies that $g_x$ is contained in the maximal Bruhat cell $Bw_0N$. Lastly, the map $\alpha$ pulls back to the morphism
\begin{align*}
\tilde{\alpha}: \mathfrak{t}^{\text{r}}\times T&\longrightarrow \He_{\mathfrak{t}^{\text{r}}} \\
							(x,t)&\longmapsto (x,tg_xB).
							\end{align*}
Since $\tilde{\alpha}$ is the pullback of $\alpha$ through a $W$-invariant map, it is $W$-equivariant. To prove the lemma, it is then sufficient to show that $\tilde{\alpha}$ extends to a morphism defined on all of $\mathfrak{t}^{\text{r}}\times\Tbar$. This morphism, being $W$-equivariant, will then descend through the covering map to an extension of $\alpha$ defined on all of $\Zbar^{\text{s}}$, and any such extension is an isomorphism because it is an isomorphism on each fiber.

Consider the basepoint $z_{\varnothing}\in\Tbar$ of the closed $G\times G$-orbit on $\Gbar$. If $(x_h,t_h)$ is a 1-parameter family in $\mathfrak{t}^{\text{r}}\times T$ which approaches $(x,z_{\varnothing})$ as $h$ tends to infinity, it follows from \cite[Section 2.4]{eve.jon:08} that
\[\lim_{h\to\infty}\alpha_i(t_h)\to\infty\]
for every simple root $\alpha_i$. Since the maximal Schubert cell is precisely the attracting set of the point $B\in G/B$ under the action of $T$ as the values of the simple roots tend to infinity \cite[Theorem 3.1.9]{chr.gin:97}, 
\[\lim_{h\to\infty}\tilde{\alpha}(x_h,t_h)=\lim_{h\to\infty}(x_h, t_hg_{x_h}B)=(x,B).\]
Since this limit is independent of the chosen one-parameter family, it follows that the map $\tilde{\alpha}$ extends continuously to the point $(x,z_{\varnothing})$. Because any $T$-fixed point of $\Tbar$ is $W$-conjugate to $z_{\varnothing}$, this implies that $\tilde{\alpha}$ extends continuously to all $T$-fixed points of $\mathfrak{t}^{\text{r}}\times\Tbar$.

The set $X\subset \mathfrak{t}^{\text{r}}\times\Tbar$ of points to which the morphism $\tilde{\alpha}$ fails to extend continuously consists exactly of those points above which the closure of the graph of $\tilde{\alpha}$ has fibers of nonzero dimension (see, for example, the proof of \cite[Lemma 7.4]{deo.han:20}). Since fiber dimension is upper semi-continuous, $X$ is a closed subvariety of $\mathfrak{t}^{\text{r}}\times \Tbar$. Since $\tilde{\alpha}$ is $T$-equivariant, $X$ is also $T$-stable. However, by the above discussion $X$ contains no $T$-fixed points---therefore, it must be the empty set. It follows that $\tilde{\alpha}$ extends to a morphism on all of $\mathfrak{t}^{\text{r}}\times\Tbar$, and therefore that $\alpha$ extends to an isomorphism as desired.
\end{proof}

\begin{theorem}
\label{isom}
The map $\alpha$ extends to an isomorphism
\[\overline{\alpha}: \Zbar\longrightarrow \He.\]
\end{theorem}
\begin{proof}
The fiber of $\Zbar$ above the regular nilpotent element $f\in\Sl$ is the closure of the unipotent subgroup $G^f$ in the wonderful compactification $\Gbar$. By \cite[Theorem 4.1]{bal:16}, the restriction of $\alpha$ to the fiber above $f$ extends to an isomorphism
\[\overline{G^f}\longrightarrow\Hess(f).\]

Let $\mathcal{L}$ be an $\mathbb{C}^*$-equivariant ample line bundle on $\He$. The pullback of its restriction to $\Hess(f)$ is then an ample line bundle on $\overline{G^f}$. Since the higher cohomology groups of the structure sheaf of $\Hess(f)$ vanish \cite[Theorem 1.1]{abe.fuj.zen:20}, the deformations of this line bundle are unobstructed and therefore it extends to an ample line bundle on a formal neighborhood of $\overline{G^f}$ in $\Zbar$ \cite[Theorem 3.3.11(iii)]{ser:06}. Pulling back through the contracting $\mathbb{C}^*$-action, it futher extends to an ample line bundle on $\Zbar$. Because $\alpha$ is $\mathbb{C}^*$-equivariant, along $\Z$ this extension agrees with the pullback $\alpha^*\mathcal{L}$. 

By Lemma \ref{extension}, the map $\alpha$ extends to an isomorphism
\[\Z\cup\Zbar^{\text{s}}\xlongrightarrow{\sim}\He^\circ\cup\Hstdss.\]
Because the fibers of $\pi_{\Zbar}$ and $\pi_\He$ are connected, the complements of the open sets $\Z\cup\Zbar^{\text{s}}$ and $\He^\circ\cup\Hstdss$ have codimension at least 2. Since both $\Zbar$ and $\He$ are projective over $\Sl$ and smooth, and since the pullback $\alpha^*\mathcal{L}$ is an ample line bundle, it follows from a Hartog argument \cite{mat.mum:64} (see also \cite[Theorem 11.39]{kol:22}) that $\alpha$ extends to an isomorphism
\[\overline{\alpha}: \Zbar\longrightarrow \He.\qedhere\]
\end{proof}

Since every regular element in $\mathfrak{g}$ is conjugate to some element of the Kostant slice $\Sl$, Theorem \ref{isom} has the following immediate corollary.

\begin{corollary}
\label{general orbits}
For any regular $x\in\mathfrak{g}$, there is a $G^x$-equivariant isomorphism of varieties 
\[\overline{G^x}\cong \Hess(x).\]
\end{corollary}

\begin{remark}
It was proved in \cite{bal:16} that there is an isomorphism between the compactified regular centralizer $\overline{G^x}$ and the closure of a generic $G^x$-orbit on the flag variety $G/B$. It then follows from Corollary \ref{general orbits} that the closure of a generic $G^x$-orbit on $G/B$ is isomorphic to the standard Hessenberg variety associated to $x$. 
\end{remark}

%
%
%
%
%
%
%
\subsection{Cohomology of $\Zbar$}
Proposition \ref{betti} shows that the cohomology of $\He$ is isomorphic to the cohomology of the Peterson variety $\Hess(f)$. We improve this result by using the $\mathbb{C}^*$-action on $\He$ described in the previous section to construct an affine paving. In view of Theorem \ref{isom}, this gives a basis for the singular cohomology space $H^*(\Zbar,\mathbb{C})$.

The $\mathbb{C}^*$-action contracts $\He$ to the $\mathbb{C}^*$-fixed points of the Peterson variety $\Hess(f)$. Because $h$ is regular, these coincide with the $T$-fixed points on the flag variety which lie in $\Hess(f)$, which are known to be exactly
\[\{w_IB\in G/B\mid I\subset\Delta\},\]
where $w_I\in W$ is the longest word of the parabolic Weyl group indexed by the subset of simple roots $I$ \cite[Proposition 5.8]{har.tym:17}.

For each $I\subset\Delta$, consider the attracting set
\[X_I\coloneqq \left\{(gB,x)\in\He\mid gB\in\Hess(x)\cap Bw_IB\right\}.\]
It is shown in \cite[Proposition 6.3]{bal:16} that
\[\dim\,\left(\Hess(f)\cap B w_IB\right)=|I|.\]
Since $\He$ is flat over $\Sl$, it follows that the dimension of the attracting set $X_I$ is $l+\vert I\vert$. 

\begin{proposition}
The attracting sets $X_I$ give a stratification of $\He$ by affine spaces, and the classes 
\[\left\{[X_I]\mid I\subset\Delta\right\}\]
form an additive basis for the singular cohomology $H^*(\He,\mathbb{C})$, where the degree of the class $[X_I]$ is
\[2l-2|I|.\]
\end{proposition}
\begin{proof}
This is a direct consequence of the Bialynicki-Birula decomposition \cite[Theorem 4.3]{bia:73}. We remark that the result of \emph{loc. cit.} is stated for smooth projective schemes. However, the projectivity assumption is needed only to establish that the fixed set of the $\mathbb{C}^*$-action is projective and that, in the limit, every point of the variety flows to a fixed point. If these conditions are satisfied, the Bialynicki-Birula decomposition holds more generally for any smooth quasi-projective variety. See for example \cite[Appendix B]{beh.bry.sze:13}, where an action satisfying these conditions is called \emph{circle compact}.
\end{proof}

%
%
%
%
%
%
%
\subsection{General fibers of the moment map}
Consider the moment map
\[\mubarr:\Tlog\longrightarrow\mathfrak{g}\]
corresponding to the right action of $G$ on the log-cotangent bundle $\Tlog$. Restricting to the action of the maximal unipotent subgroup $N$, we have a moment map
\[\nubarr:\Tlog\longrightarrow\mathfrak{n}^*.\]
Once again identifying $\mathfrak{n}^*\cong\mathfrak{g}/\mathfrak{b}$, we obtain a commutative diagram
\begin{equation*}
\begin{tikzcd}[row sep=large, column sep=large]
\Tlog		\arrow{r}{\mubarr}\arrow[rd, swap, "\nubarr"]		&\mathfrak{g}\arrow{d}\\
  														&\mathfrak{g}/\mathfrak{b}.
\end{tikzcd}
\end{equation*}
As in the previous sections, by \eqref{freeness} the action of $N$ on the fiber $\nubarr^{-1}(f)$ is free, and so this fiber is smooth.

Viewing $\Tlog$ as a subbundle of the trivial bundle $\Gbar\times\mathfrak{g}\times\mathfrak{g}$ as in \eqref{loghom}, define the closed subvariety
\[\X\coloneqq \left\{(a,y,x)\in \Tlog\mid x\in\Sl\right\}.\]
Through the action map we get an isomorphism
\[N\times\X\xlongrightarrow{\sim}\nubarr^{-1}(f),\]
and Proposition \ref{log-reduction} implies that 
\[\X\cong\nubarr^{-1}(f)/N\]
is a smooth log-symplectic variety. The left action of $G$ descends to a Hamiltonian action on $\X$, and the associated moment map is
\begin{align*}
\mux:\quad\X\quad&\longrightarrow\mathfrak{g} \\
	(a,y,x)&\longmapsto y.
	\end{align*}

Let $\gamma:\mathbb{C}^*\longrightarrow G$ once again be the one-parameter subgroup whose Lie algebra is spanned by the regular semisimple element $h$, and define an action of $\mathbb{C}^*$ on $\Gbar\times\mathfrak{g}\times\mathfrak{g}$ by
\begin{equation}
\label{contracts}
t\cdot(a,y,x)=(a\gamma(t)^{-1},t^2y,t^2\Ad_{\gamma(t)}x)\quad\text{for }t\in\mathbb{C}^*, (a,y,x)\in \Gbar\times\mathfrak{g}\times\mathfrak{g}.
\end{equation}
Since this action stabilizes both the closed subvariety $\Gbar\times\mathfrak{g}\times\Sl$ and the locus \eqref{openemb} of points of $T^*G$, it induces a well-defined action on $\X$.

The moment map $\mux$ is then equivariant with respect to the $\mathbb{C}^*$-action on $\mathfrak{g}$ which scales by a factor of $t^2$. In the limit as $t\to0$, this action therefore contracts the variety $\X$ to the fiber of $\mux$ above $0$. This fiber is given by
\begin{align*}
\mux^{-1}(0)&=\{(a,0,x)\in\Tlog\mid x\in\Sl\}\\
		&=\{(gz_\varnothing h^{-1},0,f)\mid \Ad_{h^{-1}}f\in \mathfrak{b}^-\}\\
		&\cong (G\times B^-)\cdot z_\varnothing\\
		&\cong G/B,
		\end{align*}
where $z_\varnothing$ is the basepoint of the closed orbit of minimal dimension in $\Gbar$ \cite[Corollary 4.22]{cro.ros:20}.

Via the embedding \eqref{openemb}, the cotangent bundle $T^*G$ sits inside $\Tlog$ as the locus of points of the form $(a,\Ad_ax,x)$. Intersecting $\X$ with this open locus, we obtain
\[\X^\circ\coloneqq \left\{(a,\Ad_ax,x)\in \Tlog\mid a\in G, x\in\Sl\right\}.\]
This variety, which is the Whittaker reduction of $T^*G$ with respect to the right-action of $G$, is the open dense symplectic leaf of $\X$. It is isomorphic to 
\[G\times_N(f+\mathfrak{b})\cong G\times\Sl\]
---the \emph{twisted cotangent bundle} of the base affine space $G/N$. 

We define a morphism
\begin{align*}
\beta:\qquad\X^\circ\qquad&\longrightarrow G\times_BH \\
		(a,\Ad_ax,x)&\longrightarrow [a:x].
		\end{align*}
This map commutes with the moment maps $\mux$ and $\muh$, and by Lemma \ref{trivial centralizers} it is injective. Moreover, it is $\mathbb{C}^*$-equivariant when the vector bundle $G\times_BH$ with the $\mathbb{C}^*$-action
\[t\cdot[a:x]=[a:t^2x]\quad\text{for }t\in\mathbb{C}^*, \,[a:x]\in G\times_BH\]
given by scaling along the fibers.

\begin{lemma}
\label{betaext}
The morphism $\beta$ extends to an isomorphism between the fiber of $\mux$ above $0$ and the zero-section of the vector bundle $G\times _BH$.
\end{lemma}
\begin{proof}
In the log-cotangent bundle $\Tlog$, the point $(z_\varnothing,0,f)$ can be realized as the limit
\[(z_\varnothing,0,f)=\lim_{t\to0}t\cdot(1,f,f)\]
under the action \eqref{contracts}. Since any point of $\mux^{-1}(0)$ is a left translate of this one, we can continuously extend $\beta$ to this fiber by
\[\beta(gz_\varnothing,0,f)=\lim_{t\to0}t\cdot\beta(g,\Ad_g(f),f)=\lim_{t\to0}t\cdot[g:f]=[g:0].\]
Since the stabilizer of $z_\varnothing$ under the left action of $G$ is exactly the Borel subgroup $B$, it follows that this extension is an isomorphism of varieties.
\end{proof}

We can now use Theorem \ref{isom} to prove the following proposition, which was conjectured in \cite[Conjecture 4.26]{cro.ros:20} based on an earlier version of the present paper.

\begin{proposition}
\label{conj}
The map $\beta$ extends to an isomorphism
\[\overline{\beta}:\X\longrightarrow G\times_BH\]
which commutes with the moment mapts $\mux$ and $\muh$.
\end{proposition}
\begin{proof}
Consider the regular loci
\[\X^{\text{r}}\coloneqq\mux^{-1}(\mathfrak{g}^{\text{r}})=\left\{(a,y,x)\in \Tlog\mid x\in\Sl, y\in\mathfrak{g}^\text{r}\right\}\]
and
\[\muh^{-1}(\mathfrak{g}^{\text{r}})=\left\{(gB,x)\mid x\in\mathfrak{g}^{\text{r}}, gB\in\Hess(x)\right\}=G\times_BH^{\text{r}}.\]
Since two regular elements are in the same adjoint orbit closure if and only if they are conjugate, we have isomorphisms 
\[\Zbar\times_{\Sl}\mathfrak{g}^{\text{r}}\cong \X^{\text{r}}\quad\text{and}\quad \He\times_{\Sl}\mathfrak{g}^{\text{r}}\cong G\times_BH^{\text{r}}\]
Therefore, the isomorphism
\[\overline{\alpha}:\Z\longrightarrow\He\]
of Theorem \ref{isom}
induces an isomorphism
\[\beta^{\text{r}}:\X^{\text{r}}\xlongrightarrow{\sim}G\times_BH^{\text{r}}.\]
which agrees with $\beta$ on the overlap with $\X^\circ$.

We now bound the codimension of the complements of the subvarieties $\X^\text{r}$ and $G\times_BH^{\text{r}}$. First, since regular semisimple adjoint orbits are closed, each irreducible component of the complement of $\X^{\text{r}}$ is strictly contained in an irreducible component of the complement of $\X\cap(\Gbar\times\mathfrak{g}\times\mathfrak{g}^{\text{rs}})$, which has codimension at least $1$. It follows that
\[\codim_{\X}(\X\backslash\X^{\text{r}})\geq2.\]
Moreover, the codimension of the complement of $G\times_BH^{\text{r}}$ in $G\times_BH$ is equal to
\[\codim_{(G\times_BH)}(G\times_B(H\backslash H^\text{r}))=\codim_H(H\backslash H^{\text{r}})\geq 3\]
where the last inequality follows, for instance, from the proof of \cite[Lemma 4.15]{cro.ros:20}. 

Therefore $\beta^{\text{r}}$ is a birational map between open subvarieties whose complements have codimension at least $2$. Moreover, by Lemma \ref{betaext}, $\beta$ extends continuously to an isomorphism between the zero-fiber of $\mux$ and the zero-section $G/B$ of $G\times_BH$. Since any line bundle on $G/B$ extends to a unique line bundle on the vector bundle $G\times_BH$ by pulling back along the bundle map, it follows once again that $\beta^{\text{r}}$ intertwines two ample line bundles. Therefore, once again by \cite[Theorem 11.39]{kol:22} it follows that $\beta^{\text{r}}$ extends to an isomorphism
\[\overline{\beta}:\X\longrightarrow G\times_BH.\qedhere\]
\end{proof}

\begin{corollary}
Let $y\in\mathfrak{g}$ and let $x\in\Sl$ be the unique point in the Kostant slice such that $y$ is in the closure of the adjoint orbit of $x$. Then there is an isomorphism
\[\mubar^{-1}(y,x)\cong\Hess(y).\]
\end{corollary}
\begin{proof}
The isomorphism of Proposition \ref{conj} induces an isomorphism between corresponding moment map fibers 
\[\mux^{-1}(y)\cong\muh^{-1}(y).\]
The left-hand side is precisely $\mubar^{-1}(y,x)$, and the right-hand side is $\Hess(y).$\qedhere\\
\end{proof}

%
%
%
%
%
%
%
\section{The Poisson structure on the standard Hessenberg family}
\label{fifth}
Both sides of the isomorphism 
\[\overline{\alpha}:\Zbar\xlongrightarrow{\sim}\He\]
constructed in Theorem \ref{isom} are equipped with natural Poisson structures. In this section we will show that the morphism $\overline{\alpha}$ is Poisson. We will use this observation to prove that the Poisson structure on the standard universal Hessenberg family $G\times_BH$ is log-symplectic. To do this we will apply the theory of Poisson transversals, which we first briefly recall. See, for instance, \cite[Section 2]{fre.mar:17} for more details.

\subsection{Poisson transversals}
Let $X$ be a complex Poisson manifold. A \emph{Poisson transversal} in $X$ is an embedded submanifold $V$ such that, for any symplectic leaf $(L, \omega_L)$ of $X$ which meets $V$,
\begin{itemize}
\item $V$ is transverse to $L$, and 
\item the pullback of $\omega_L$ to $V\cap L$ is symplectic. 
\end{itemize}
The property of being a Poisson transversal induces a natural Poisson structure on $V$, and the symplectic leaves of this structure are precisely the intersections
\[(V\cap L, \omega_{L\vert V\cap L}).\]

\begin{example}
\begin{enumerate}[leftmargin=20pt]
\item If $X$ is symplectic, then the Poisson transversals in $X$ are the symplectic submanifolds, and the induced Poisson structures are simply the restrictions of the symplectic form.
\item If all the symplectic leaves of $X$ have the same dimension, and if $V$ is a submanifold of complementary dimension which meets each leaf transversally, then $V$ is a Poisson transversal and its induced Poisson structure is trivial.
\item If $X$ is the semisimple Lie algebra $\mathfrak{g}$ with the standard Kostant-Kirillov Poisson structure, and $\{e,h,f\}\subset\mathfrak{g}$ is any $\mathfrak{sl}_2$-triple, the Slodowy slice $f+\mathfrak{g}^e$ is a Poisson transversal \cite[Section 3.2]{gan.gin:02}.
\end{enumerate}
\end{example}

The following lemma appears in \cite{fre.mar:17} in the setting of real differential manifolds. For complex manifolds the proof is identical, so we do not reproduce it.
\begin{lemma}{\cite[Lemma 7]{fre.mar:17}}
\label{transversals}
Let $\varphi:X_1\longrightarrow X_2$ be a Poisson morphism between smooth Poisson varieties, and suppose that $V\subset X_2$ is a Poisson transversal. Then the preimage $\varphi^{-1}(V)$ is a Poisson transversal in $X_1$. In particular, $\varphi^{-1}(V)$ is smooth.
\end{lemma}

\begin{remark}
It follows from Lemma \ref{transversals} that, for any Hamiltonian $G$-manifold $M$ with moment map
\[\varphi:M\longrightarrow\mathfrak{g},\]
the preimage $\Phi^{-1}(\Sl)$ is a Poisson transversal in $M$ and therefore has a natural Poisson structure. Moreover, \eqref{freeness} gives an isomorphism
\begin{equation}
\label{canonic}
\varphi^{-1}(\Sl)\cong \varphi^{-1}(f+\mathfrak{b})/N,
\end{equation}
which is an isomorphism of Poisson manifolds. While this is true in full generality \cite[Proposition 3.13]{cro.ros:21}, in the symplectic and log-symplectic setting it is particularly easy to see and seems to have been known to experts for some time, so we include here a short explanation. The isomorphism \eqref{canonic} fits into the diagram
\begin{equation*}
\begin{tikzcd}[row sep=large, column sep=large]
\varphi^{-1}(\Sl)		\arrow[r, hook, "\imath"]	\arrow[rd,swap,"\sim"]	&\varphi^{-1}(f+\mathfrak{b}) \arrow[d, "q"]\arrow[r,hook,"\jmath"] &M\\
  										&\varphi^{-1}(f+\mathfrak{b})/N.&
\end{tikzcd}
\end{equation*}
Letting $\ombar$ be the $2$-form induced by Whittaker reduction on the quotient $\varphi^{-1}(f+\mathfrak{b})/N$, we have
\[\imath^*q^*\ombar=\imath^*\jmath^*\omega,\]
and therefore \eqref{canonic} is a Poisson map. In particular the Poisson manifolds $\He$, $\Z$, and $\Zbar$ can be realized as the Poisson transversals
\[\muh^{-1}(\Sl),\quad \mu^{-1}(\Sl\times\Sl),\quad\text{and}\quad\mubar^{-1}(\Sl\times\Sl)\]
in $G\times_BH$, $T^*G$, and $\Tlog$ respectively. 
\end{remark}

We prove the following result on log-symplectic Poisson transversals in the setting of complex manifolds. Because in the next section all Poisson algebraic varieties are smooth, we will be able to apply it to our algebraic setting as well.

\begin{proposition}
\label{log-transversals}
Let $(X,\pi)$ be a complex Poisson manifold and suppose that $V\subset X$ is a Poisson transversal which intersects every symplectic leaf. If the induced Poisson structure on $V$ is log-symplectic, then the Poisson structure on $X$ is also log-symplectic.
\end{proposition}
\begin{proof}
Choose a point $x\in V$. By the Weinstein splitting theorem \cite[Theorem 1.25]{lau.pic.van:13}, there is an open neighborhood $U\subset X$ containing $x$ such that $U$ is Poisson-diffeomorphic to the product of $V\cap U$ and a symplectic manifold $(L,\rho_L)$---that is,
\[(U,\pi_{\vert U})\cong (U\cap V, \pi_V)\times (L,\rho_L).\]
If $V$ is log-symplectic, then the Poisson structure $\pi_{\vert U}$ is also log-symplectic. It follows that there is an open neighborhood of $V$ in $X$ where the Poisson bivector is log-symplectic at every point.

If $V$ intersects every symplectic leaf, then any point in $X$ is reached from a point of $V$ by flowing along Hamiltonian vector fields. Since the Poisson structure is invariant under this flow, it follows that it is log-symplectic at every point of $X$.\qedhere
\end{proof}

%
%
%
%
%
%
%
\subsection{The log-symplectic structure on $G\times_BH$}
The symplectic leaves of $G\times_B H$ correspond bijectively to the $B$-orbits on the quotient space $H/\mathfrak{n}$. They are described in detail in \cite{abe.cro:18}. In particular, there is a unique open dense orbit whose preimage in $H$ is $\stdcirc$. It follows that the unique open dense symplectic leaf of $G\times_BH$ is
\[G\times_B\stdcirc.\]

\begin{lemma}
The open dense symplectic leaf of $\He$ is precisely the image $\He^\circ$ of the morphism $\alpha$ defined in \eqref{morph}.
\end{lemma}
\begin{proof}
The open dense symplectic leaf of $\He$ is the Whittaker reduction of $G\times_B\stdcirc$, which we can write as
\[L_0\coloneqq \left\{(gB,x)\in G/B\times\Sl\mid \Ad_{g^{-1}}x\in \stdcirc\right\}.\]
Recall that $\alpha$ was defined by
\begin{align*}
\alpha:\, &\mathcal{Z}\longrightarrow \He \\
		&(g,x)\longmapsto (gB,x),
		\end{align*}
and that the points of $\mathcal{Z}$ are pairs $(g,x)\in G\times\mathfrak{g}$ with the property that $g$ centralizers $x$. It follows immediately that $\He^\circ\subset L_0$. Therefore, it is enough to check that the fiber of $L_0$ over $x\in\Sl$ is a single $G^x$-orbit. 

Let $g_1B, g_2B$ be elements of this fiber, so that $\Ad_{g_1^{-1}}x,\,\Ad_{g_2^{-1}}x\in \stdcirc$. By \eqref{freeness}, the action map gives an isomorphism
\[B\times \Sl\longrightarrow \stdcirc.\]
This implies that any two elements in $\stdcirc$ that are $G$-conjugate are actually $B$-conjugate, so there is an element $b\in B$ with 
\[\Ad_{g_1^{-1}}x=\Ad_{bg_2^{-1}}x.\]
It follows that $g_1bg_2^{-1}\in G^x$, and therefore the flags $g_1B$ and $g_2B$ are in the same $G^x$-orbit.
\end{proof}

\begin{proposition}
\label{log-isom}
The isomorphism
\[\overline{\alpha}:\Zbar\longrightarrow\He\]
defined in Theorem \ref{isom} is an isomorphism of Poisson manifolds. In particular, the Poisson structure on $\He$ is log-symplectic.
\end{proposition}
\begin{proof}
The isomorphism $\overline{\alpha}$ is Poisson if and only if its differential $\overline{\alpha}_*$ maps the Poisson bivector of $\Zbar$ to the Poisson bivector of $\He$. It is sufficient to check this condition on the open dense symplectic leaf---that is, to check that the isomorphism of varieties
\[\alpha:\mathcal{Z}\longrightarrow\Hstdcirc\]
is a symplectomorphism when $\mathcal{Z}$ is equipped with the Poisson structure described in Section \ref{symplectic} and $\Hstdcirc$ is equipped with the Poisson structure coming from Hamiltonian reduction of $G\times_B\stdcirc$.

Consider again the moment map 
\begin{align*}
\mu_R:\,\,T^*G&\longrightarrow \mathfrak{g}\\
	(a,x)&\longmapsto x
	\end{align*}
for the right action of $G$ on $T^*G$. Since $\mu_R$ is $G$-equivariant and Poisson, we have an isomorphism of coisotropic varieties
\[T\times \mu_R^{-1}(f+\mathfrak{b})\cong\mu_R^{-1}(\stdcirc)\]
induced by the action map, where the coisotropic structure on $T$ is zero. Quotienting by the action of $B$ gives a symplectomorphism
\begin{equation}
\label{symplecto}
G\times_N(f+\mathfrak{b})\cong \mu_R^{-1}(f+\mathfrak{b})/N\cong \mu_R^{-1}(\stdcirc)/B\cong G\times_B\stdcirc.
\end{equation}

The universal centralizer $\mathcal{Z}$ is the Whittaker reduction of the left-hand side with respect to the left action of $N$, and the leaf $\Hstdcirc$ is the Whittaker reduction of the right-hand side with respect to $\muh$. Since the isomorphism $\alpha$ is induced from \eqref{symplecto} by taking Whittaker reduction of both sides, it is a symplectomorphism.
\end{proof}

\begin{theorem}
The Poisson structure on the space $G\times_B H$ is log-symplectic.
\end{theorem}
\begin{proof}
The moment map 
\[\muh:G\times_B H\longrightarrow \mathfrak{g}\]
is a Poisson morphism and $\Sl$ is a Poisson transversal in $\mathfrak{g}$. By Lemma \ref{transversals}, the preimage
\[\mu^{-1}_{H}(\Sl)\cong\He\]
is a Poisson transversal in $G\times_BH$, and by Proposition \ref{log-isom} it is log-symplectic. It remains to show that it intersects every symplectic leaf of $G\times_BH$---then the theorem will follow from Proposition \ref{log-transversals}.

The symplectic leaves of $G\times_BH$ are in bijection with the $B$-orbits on the quotient space $H/\mathfrak{n}$. If $\mathcal{O}\subset H/\mathfrak{n}$ is such an orbit, we write $\mathcal{O}+\mathfrak{n}$ for its preimage in $H$. Then the corresponding symplectic leaf is $G\times_B(\mathcal{O}+\mathfrak{n}).$

For each simple root $\alpha\in \Delta$, let $\check{\alpha}$ be the corresponding coroot and let $f_\alpha\in\mathfrak{g}_{-\alpha}$ be a fixed choice of negative simple root vector. Then the $B$-orbits on $H/\mathfrak{n}$ are indexed by the data
\begin{itemize}
\item a subset of simple roots $I\subset\Delta$
\item a collection of elements $\{h_{\alpha}\in\mathbb{C}\check{\alpha}\mid\alpha\not\in I\},$
\end{itemize}
in the sense that each orbit $\mathcal{O}\subset H/\mathfrak{n}$ contains a unique coset of the form
\[\left(\sum_{\alpha\in I}f_\alpha\right)+\left(\sum_{\alpha\not\in I}h_\alpha\right)+\mathfrak{n}.\]
Then $\mathcal{O}+\mathfrak{n}$ contains the element
\[y=\left(\sum_{\alpha\in I}f_\alpha\right)+\left(\sum_{\alpha\not\in I}h_\alpha\right)+e\in \mathfrak{b}^-+e\subset\mathfrak{g}^{\text{r}}.\]
Since $y$ is regular, there is some $g\in G$ such that $gy\in\Sl$. The point 
\[[g:y]\in G\times_BH\]
 lies both in the symplectic leaf corresponding to $\mathcal{O}$ and in the Poisson transversal $\muh^{-1}(\Sl)$.\qedhere\\
\end{proof}

\section{Relation to Coulomb branches}
\label{coulomb}

Let $\check{G}$ be the Langlands dual group of $G$, and denote by $\mathcal{K}=\mathbb{C}((t))$ the field of Laurent series and by $\mathcal{O}=\mathbb{C}[[t]]$ its ring of integers. The affine Grassmannian of $\check{G}$ is the ind-scheme
\[\gr\coloneqq \check{G}(\mathcal{K})/\check{G}(\mathcal{O}).\]
The $\check{G}(\mathcal{O})$-orbits on $\gr$ are finite-dimensional varieties indexed by $\Lambda^+$, the set of dominant characters of the maximal torus $T$ of $G$. They form a stratification of $\gr$, ordered by the standard partial order on the character lattice.

The equivariant homology space 
\[H^{\check{G}(\mathcal{O})}_\bullet(\gr)\]
has a ring structure given by the convolution product \cite[Section 2.7]{chr.gin:97}. It is a Poisson algebra \cite{bez.fin.mir:14} whose Poisson structure comes from the non-commutative one-parameter deformation 
\[H^{\check{G}(\mathcal{O})\rtimes\mathbb{C}^*}_\bullet(\gr).\]
This is an example of a \emph{Coulomb branch} in the sense of Nakajima \cite{nak:17}.

In \cite{bez.fin.mir:14} the authors construct an isomorphism of Poisson algebras
\begin{equation}
\label{bfm}
H^{\check{G}(\mathcal{O})}_\bullet(\gr)\cong\mathbb{C}[\mathcal{Z}].
\end{equation}
In this section we will explain, through the lens of this isomorphism, how to obtain the partial compactification $\Zbar$ directly from the Coulomb branch $H^{\check{G}(\mathcal{O})}_\bullet(\gr)$.

First notice that both sides of \eqref{bfm} have natural filtrations indexed by the lattice $\Lambda$ of characters of $T$. The filtration on the equivariant homology ring $H^{\check{G}(\mathcal{O})}_\bullet(\gr)$ is induced by the support in $\check{G}(\mathcal{O})$-orbit closures. The filtration on the coordinate ring $\mathbb{C}[\mathcal{Z}]$ is inherited through the surjection
\[\mathbb{C}[G]\otimes\mathbb{C}[\Sl]\cong\mathbb{C}[G\times \Sl]\twoheadrightarrow\mathbb{C}[\mathcal{Z}]\]
from the Peter-Weyl filtration on $\mathbb{C}[G]$.

\begin{proposition}
\label{bfm-explained}
The isomorphism \eqref{bfm} is an isomorphism of filtered algebras.
\end{proposition}
\begin{proof}
We recall an outline of the construction of \eqref{bfm}. Let $R$ be the set of roots of $\mathfrak{g}$ and $W$ the Weyl group. In \cite[Proposition 2.8]{bez.fin.mir:14}, the universal centralizer $\mathcal{Z}$ is identified with the spectrum of the Weyl group invariants of an affine blow-up of $T\times\mathfrak{t}$:
\[\mathbb{C}[\mathcal{Z}]\cong\mathbb{C}\left[T\times\mathfrak{t}, \frac{(t^\alpha-1)\times0}{1\times\alpha}\mid \alpha\in R\right]^W.\]
Here we write $\alpha\in R$ for the function on $\mathfrak{t}$ and $t^\alpha$ for the corresponding function on $T$. The right-hand side is filtered by dominant weights of $T$, and from the construction it is clear that this isomorphism is compatible with the filtrations on both sides.

Let $\check{T}$ be the maximal torus of $\check{G}$. Using the fixed-point localization theorem, it is shown in \cite[Section 6.3]{bez.fin.mir:14} that there is an isomorphism of localized $\mathbb{C}[\mathfrak{t}]^W$-modules
\begin{align*}
H^{\check{G}(\mathcal{O})}(\gr)_{\vert\mathfrak{t}^{\text{r}}/W} \cong H^{\check{T}(\mathcal{O})}(\grt)^W_{\vert\mathfrak{t}^{\text{r}}/W} 
												\cong \mathbb{C}[T\times \mathfrak{t}]^W_{\vert\mathfrak{t}^{\text{r}}/W}  
												\cong \mathbb{C}[\mathcal{Z}]_{\vert\mathfrak{t}^{\text{r}}/W}. 
\end{align*}
The authors then prove that this restricts to the desired isomorphism \eqref{bfm}. The filtrations we are interested in are compatible with the localization and with the first and third isomorphisms above. But it is clear that they also coincide under the second, which is induced by
\[H^{\check{T}(\mathcal{O})}(\grt) \cong \mathbb{C}\Lambda\otimes H^{\check{T}(\mathcal{O})}(\pt)
							\cong \mathbb{C}\Lambda\otimes \mathbb{C}[\mathfrak{t}]
							\cong \mathbb{C}[T \times\mathfrak{t}]. \qedhere\]
\end{proof}

Now, the wonderful compactification $\Gbar$ admits a Rees-type construction from the Peter-Weyl filtration on $\mathbb{C}[G]$ as follows. One considers the Rees algebra
\[\rees_\Lambda\mathbb{C}[G]=\bigoplus_{\lambda\in\Lambda}\mathbb{C}[G]_{\leq\lambda}t^\lambda\subset\mathbb{C}[G\times T],\]
where 
\[\mathbb{C}[G]_{\leq\lambda}=\bigoplus_{\mu\leq\lambda}V_\mu^*\otimes V_\mu\]
is a sum over all dominant weights $\mu$ which are less than or equal to $\lambda$ in the partial ordering on $\Lambda$. The wonderful compactification $\Gbar$ is obtained from $\rees_\Lambda\mathbb{C}[G]$ by a multi-proj construction \cite[Chapter 6]{bri.kum:05}: 
\[\Gbar\cong\proj\left(\rees_\Lambda\mathbb{C}[G]\right).\]
In other words, the \emph{Vinberg monoid}
\[V_G\coloneqq \spec\left(\rees_\Lambda\mathbb{C}[G]\right)\]
carries a natural action of $T$, and $\Gbar$ is the quotient of an open dense subset by this action.

An analogous procedure produces the partial compactification $\Zbar$. We take the Rees algebra of $\mathbb{C}[\Z]$ with respect to the filtration by weights of $T$ to obtain
\[\Zbar\cong\proj_{\Sl}\left(\rees_\Lambda\mathbb{C}[\Z]\right),\]
where the right-hand side is a multi-proj relative to the Kostant slice $\Sl$. In view of Proposition \ref{bfm-explained}, we have proved the following result:
\begin{proposition}
There is an isomorphism of projective $\Sl$-schemes $\displaystyle \Zbar\cong \proj_{\Sl}\left(\rees_\Lambda H^{\check{G}(\mathcal{O})}_\bullet(\gr)\right).$
\end{proposition}

\begin{remark}
This construction is similar to the approach suggested in \cite[Remark 3.7]{bra.fin.nak:19}.
\end{remark}

\bibliographystyle{plain}
\bibliography{draft}

\end{document}